\documentclass[11pt,a4paper,reqno]{amsart}
\usepackage{amsmath,amssymb,amsfonts,epsfig,mathrsfs,cite, hyperref}
\usepackage[T1]{fontenc}
\usepackage{color}
\usepackage{array}
\usepackage{amsthm}
\usepackage{amstext}
\usepackage{graphicx}
\usepackage{setspace}

\usepackage[title]{appendix}

\makeatletter
\@namedef{subjclassname@2020}{%
  \textup{2020} Mathematics Subject Classification}
\makeatother

\usepackage[margin=2.5cm]{geometry}
\usepackage{color}
\usepackage{enumitem}
\usepackage{amscd,psfrag}
\usepackage{yhmath}

\usepackage{comment}

\allowdisplaybreaks[4]

\usepackage{slashed}

\makeatletter
\usepackage{epstopdf}

\usepackage{indentfirst}	

\usepackage[normalem]{ulem}
\theoremstyle{plain}

\newtheorem{definition}{Definition}
\newtheorem{theorem}[definition]{Theorem}
\newtheorem*{theorem*}{Theorem}

\newtheorem{remark}[definition]{Remark}

\newtheorem*{remark*}{Remark}
\newtheorem*{sideremark*}{Side Remark}

\newtheorem*{claim*}{Claim}
\newtheorem*{q*}{Question}
\newtheorem{lemma}[definition]{Lemma}

\newtheorem*{corollary*}{Corollary}

\newtheorem{proposition}[definition]{Proposition}

\newcommand{\R}{\mathbb{R}}

\newcommand{\na}{\nabla}

\newcommand{\p}{\partial}

\newcommand{\e}{\varepsilon}

\newcommand{\dd}{{\rm d}}

\newcommand{\zero}{{\underline{\bf 0}}}

\newcommand{\sm}{{\mathscr{S}_M}}

\newcommand{\B}{{\bf B}}

\newcommand{\baru}{{\overline{U}}}

\def\XXint#1#2#3{{\setbox0=\hbox{$#1{#2#3}{\int}$ }
\vcenter{\hbox{$#2#3$ }}\kern-.6\wd0}}

\newcommand{\co}{{\overline{\Omega}}}
\newcommand{\po}{{\partial{\Omega}}}

\newcommand{\tu}{{\widetilde{U}}}
\newcommand{\haus}{{\mathcal{H}^2}}

\newcommand{\stokes}{{\mathbf{A}}}

\newcommand{\bilin}{{\mathscr{B}}}

\newcommand{\n}{{\bf n}}

\title{Boundary epsilon regularity for incompressible Navier--Stokes equations via weak-strong uniqueness}

\author{Siran Li}

\address{Siran Li: School of Mathematical Sciences $\&$ CMA-Shanghai, Shanghai Jiao Tong University, No.~6 Science Buildings,
800 Dongchuan Road, Minhang District, Shanghai, China (200240)}

\email{\texttt{siran.li@sjtu.edu.cn}}

\keywords{Navier--Stokes equations; boundary regularity; epsilon regularity; very weak solutions.}

\subjclass[2020]{35Q30, 76D05}
\date{\today}

\begin{document}

\begin{abstract}
We show that finite-energy weak solutions to the incompressible Navier--Stokes equations on a three-dimensional bounded smooth domain are regular up to the boundary, provided that the $L^4_tL^4_x$-norm of the solution is smaller than a constant depending only on the domain. This answers a problem raised in [D. Albritton, T. Barker, and C. Prange, \textit{J. Math. Fluid Mech.} \textbf{25} (2023), Paper No. 49]. Our proof relies on a new slicing construction near the boundary of the domain.

\end{abstract}
\maketitle

\section{Introduction}\label{sec: intro}

This paper is concerned with the regularity theory \emph{up to the boundary} of the incompressible Navier--Stokes equations on a smooth bounded domain $\Omega \subset \R^3$:
\begin{equation}\label{NSE}
\begin{cases}
\p_t U + U \cdot \na U - \Delta U + \na P = 0,\\
\na \cdot U = 0\qquad \text{in } ]-1,0[\times\Omega.
\end{cases}
\end{equation}
Here, $U: ]t_1,t_2[\times\Omega\to\R^3$ is the velocity and $p: ]t_1,t_2[ \times \Omega \to \R$ is the pressure of an incompressible fluid flow for some $-\infty \leq t_1<t_2 \leq \infty$. The kinematic viscosity is normalised to $1$.

The partial regularity theory for the incompressible Navier--Stokes equations has long been a central topic in mathematical hydrodynamics and the analysis of PDE \cite{leray, Lady, fef, lions, tao, hopf}. In this direction, the epsilon regularity theorem at one scale has played a foundational role in the proof of some of the most important results in the  regularity theory for Navier--Stokes; \emph{cf.} Caffarelli--Kohn--Nirenberg \cite{CKN}, Escauriaza--Seregin--\u{S}ver\'{a}k \cite{ESS},  Ne\u{c}as--R${\rm \mathring{u}}$\v{z}i\v{c}ka--\u{S}ver\'{a}k \cite{NRS}, and Tsai \cite{Tsai}, etc. Various proofs of the epsilon regularity theorem can be found in the literature, either via direct iteration \cite{CKN, LR} and De Giorgi--Nash--Moser iteration arguments \cite{Vas, Wang}, or via compactness arguments \cite{Lin, LS}.

In the recent nice work \cite{ABP}, Albritton--Barker--Prange (2023) gave a  new proof of the one-scale epsilon regularity theorem, based on a weak-strong uniqueness argument and the theory of \emph{very weak solutions} to the Navier--Stokes equations pioneered by Foias \cite{foias}, Amann \cite{ama1, ama2}, and Farwig--Galdi--Sohr \cite{FGS}.\footnote{The authors of \cite{ABP} attributed ``the genesis of the paper'' to ``an offhanded comment in 2017'' by Vladim\'{i}r \u{S}ver\'{a}k.} The following interior epsilon regularity theorem --- the quantitative part of \cite[Theorem~B]{ABP} --- ascertains that if the first-time singularities of a finite-energy weak solution $U$ to the Navier--Stokes equations do not occur strictly before time $t=0$, and if the spacetime $L^4$-norm of $U$ over $]-1,0[\times\B_1$ is smaller than a universal constant $\overline{\e}>0$, then $U$ is essentially bounded and hence smooth (\emph{cf}. Serrin \cite{Ser}) up to time $0$ in the interior of $\Omega$.  

\begin{theorem}\label{thm: ABP}
There exists $\overline{\e} \in ]0, 1[$ such that for any $M \in ]0,\infty[$, for any finite-energy weak solution $U$ to the Navier--Stokes equations belonging to $C^\infty\left(]-1,T[ \times \B_1\right)$ for all $T \in ]-1,0[$ such that $\|U\|_{L^\infty_t L^2_x\left(]-1,0[\times\B_1\right)} + \|\na U\|_{L^2_tL^2_x\left(]-1,0[\times\B_1\right)} \leq M,$ the following result holds. 

Let $0<\e<\overline{\e}$. Assume that $\|U\|_{L^4\left(]-1,0[ \times \B_1\right)} \leq \e$. Then $U$ is essentially bounded in $]-1,0[ \times \B_{1/4}$ with $\|U\|_{L^\infty\left(]-1,0[ \times \B_{1/4}\right)} \leq P(M,\e)$, where $P$ is a polynomial. 
\end{theorem}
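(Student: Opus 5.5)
Following the weak--strong uniqueness strategy of \cite{ABP}, we construct a bounded \emph{strong} solution in a slightly smaller cylinder and identify it with $U$. First localise: choose a radius $r\in]1/2,1[$ (by Fubini/slicing) such that the trace of $U$ on the lateral boundary $]-1,0[\times\p\B_r$ is controlled by the $L^4$-norm, e.g. $\|U\|_{L^4(]-1,0[\times\p\B_r)}\lesssim\e$. Similarly choose an initial time $t_0\in]-1,-1/2[$ with $\|U(t_0)\|_{L^4(\B_r)}\lesssim\e$. Since $U$ is smooth before time $0$, these traces make sense. We then view $U$ on $]t_0,0[\times\B_r$ as a solution of a boundary-initial value problem with small data in critical spaces.

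Next, construct a \emph{very weak solution} $V$ on $]t_0,0[\times\B_r$ with the same initial data and the same boundary data (after subtracting the harmonic/Stokes extension of the flux-corrected boundary values), using the Amann and Farwig--Galdi--Sohr theory. The key point is that in the critical class $L^4_tL^4_x$ (hence $L^s_tL^q_x$ with $2/s+3/q=1$) a fixed-point argument for the Duhamel formula closes provided the data are below a threshold; this is where $\overline{\e}$ is fixed, depending only on the linear Stokes estimates on $\B_r$. Parabolic smoothing for the Stokes system then yields $V\in L^\infty(]t_0,0[\times\B_{1/4})$ and actually smooth interior regularity, with bounds polynomial in $\e$ and in the data. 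The pressure is handled by working with the Leray projector on $\B_r$, so that $M$ enters only through the energy bounds of $U$ used to control the boundary and initial traces and the weak formulation.

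Then prove $U=V$ on $]t_0,0[\times\B_r$ by a weak--strong uniqueness argument: $U$ is the smooth solution up to every $T<0$, and $V$ is the unique solution in the small critical class; on each $]t_0,T[$ both are very weak solutions with identical data, and $U\in L^4$ with small norm places $U$ in the uniqueness class, so $U=V$ on $]t_0,T[$ for all $T<0$, hence on $]t_0,0[$. The bound $\|U\|_{L^\infty(]-1/2,0[\times\B_{1/4})}\le P(M,\e)$ follows, and the interval $]-1,-1/2]$ is covered by smoothness/ repeated use with shifted initial times (or by choosing $t_0$ near $-1$ and shrinking). The main obstacle is the uniqueness step: very weak solutions carry no energy, so one must verify that $U$ genuinely solves the very weak formulation with the chosen (merely $L^4$) boundary data and that the uniqueness of the fixed point holds within the whole small $L^4$ ball rather than only for the constructed solution; I would try to handle this by a duality argument against the adjoint Stokes problem, estimating $\|U-V\|_{L^4}\le C(\|U\|_{L^4}+\|V\|_{L^4})\|U-V\|_{L^4}$ and absorbing using smallness.
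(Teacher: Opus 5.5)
Your architecture (space--time slicing, a very weak Stokes solution carrying the data, a contraction for the nonlinear correction, weak--strong uniqueness, then a Lady\v{z}enskaja--Prodi--Serrin bootstrap) is the one the paper follows. However, two load-bearing steps rest on treating $L^4_tL^4_x$ as critical. It is supercritical ($\frac24+\frac34=\frac54>1$), so no Duhamel bilinear estimate closes in it.

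What actually puts you in a critical class is the Farwig--Kozono--Sohr gain from boundary to interior (Proposition~\ref{prop: very weak} with $s=4$, $q=6$). $L^4_tL^4_x$ data on the sliced sphere, together with $L^4$ initial data, yield $\overline{U}\in L^4_tL^6_x$ with norm $\lesssim\e$. The contraction for the zero-data correction $V$ is then run in $L^4_tL^6_x$, using the $(t-s)^{-3/4}$ semigroup bound and Hardy--Littlewood--Sobolev. This gain is the heart of the method and should be stated explicitly.

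The genuine gap is the uniqueness step. The inequality $\|U-V\|_{L^4}\le C(\|U\|_{L^4}+\|V\|_{L^4})\|U-V\|_{L^4}$ is false by scaling. Zero-data Stokes with forcing $\na\cdot F$, $F\in L^2_{t,x}$, only lands in the energy class $L^\infty_tL^2_x\cap L^2_t\dot W^{1,2}_x\subset L^{10/3}_{t,x}$. Small $L^4_{t,x}$-norm is therefore not a uniqueness class, and duality against the adjoint problem meets the same obstruction.

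The missing idea is the energy argument. The difference $Z=\overline{U}+V-U$ has zero initial and boundary data and forcing $F=U\otimes U-(\overline{U}+V)\otimes(\overline{U}+V)\in L^2_{t,x}$ (by H\"older). By Proposition~\ref{prop: uniqueness and energy}, $Z$ lies in $C^0_tL^2_x\cap L^2_tW^{1,2}_{0,x}$ and satisfies \eqref{energy id}. Bound the trilinear term by $\|\overline{U}+V\|_{L^6}\|Z\|_{L^3}\|\na Z\|_{L^2}$ and apply Gr\"onwall with $\|\overline{U}+V\|^4_{L^6_x}\in L^1_t$; this gives $Z\equiv0$, with no smallness of $U$ needed.

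Alternatively, your observation that $U$ is smooth up to each $T<0$ can be used correctly. It makes $U$ bounded, hence in $L^4_tL^6_x$, on $]t_0,T[\times\B_r$. A contraction in $L^4_tL^6_x$ on short subintervals then gives $U=\overline{U}+V$ there, with the smallness coming from absolute continuity in time rather than from $\e$.

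Two further points.
\begin{enumerate}
\item Parabolic smoothing cannot give $L^\infty$ bounds in terms of $\e$ and the data alone. Serrin-type solutions $a(t)\na h(x)$, with $h$ harmonic and $a$ a tall thin smooth bump, have small $L^4$-norm but large supremum. So $M$ enters exactly in the bootstrap from $L^4_tL^6_x$ to $L^\infty$, via $U\in L^\infty_tL^2_x$. It does not enter in controlling the traces, which come from $\e$ by slicing.
\item The interval $]-1,-\frac12]$ cannot be covered. Smooth solutions issuing at $t=-1$ from concentrated data of small $L^3$-norm have arbitrarily small energy and $L^4$-norm but arbitrarily large supremum near $t=-1$. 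The argument, here as in the paper (which works on $]t_0,0[$), only yields a bound on a cylinder bounded away from $t=-1$.
\end{enumerate}
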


For convenience of reference, we denote\footnote{For a function $f=f(x)$, we write $\|f\|_{\dot{W}^{1,2}_x(\Omega)} := \left\{\int_{\Omega} |\na f(x)|^2\,\dd x\right\}^{1/2} \equiv \|\na f\|_{L^2_x(\Omega)}$. }
\begin{align}
\label{finite energy weak sol, def}    \sm(]t_1,t_2[\times\Omega]) &:= \bigg\{U:\,\text{ $U$ is a finite-energy weak solution in $]t_1,t_2[\times\Omega$ } \nonumber \\
    &\qquad \qquad \text{ with }\|U\|_{\left(L^\infty_t L^2_x \cap L^2_t \dot{W}^{1,2}_x\right)\left(]t_1,t_2[\times\Omega\right)} \leq M \bigg\}.
\end{align}

\begin{definition}
A function $U: ]t_1,t_2[\times\Omega \to \R^3$  is said to be a finite-energy weak solution to the Navier--Stokes equations in $]t_1,t_2[\times\Omega$ if $U$ satisfies
\begin{align*}
    \p_t U + U \cdot \na U - \Delta U + \na P = 0,\qquad \na \cdot U = 0\qquad \text{ in } \mathcal{D'}\left(]t_1,t_2[\times\Omega\right)
\end{align*}
and the norm $\|U\|_{\left(L^\infty_t L^2_x \cap L^2_t \dot{W}^{1,2}_x\right)\left(]t_1,t_2[\times\Omega\right)}$ is finite. 
\end{definition}

A brief outline of the proof in \cite{ABP} is given below. It is a perturbative argument: thanks to the hypothesis $\|U\|_{L^4_t L^4_x} \leq \e < \overline{\e}$, one may regard the very weak solution to the Navier--Stokes equations as a perturbation of the very weak solution to the Stokes problem. Farwig--Galdi--Sohr \cite{FGS} developed a linear theory for initial-boundary value problems for the very weak solutions. The major technicality for its application to the epsilon regularity lies in the selection of appropriate initial-boundary data.

Consider the initial-boundary value problem for the  Stokes problem on a smooth\footnote{By \cite{FGS, FKS}, $C^{2,1}$-regularity for the domain $\Omega$ suffices here.} bounded domain $\Omega \subset \R^3$ and $-\infty< t_1<t_2<\infty$:
\begin{equation}\label{stokes}
\begin{cases}
    &\p_t U -\Delta U + \na P = \na \cdot F,\qquad \na \cdot U=0\qquad \text{ in }]t_1,t_2[\times\Omega,\\
    &U = a \qquad \text{ on } ]t_1,t_2[ \times \po,\\
    &U=b\qquad \text{ on } \{t_1\} \times \Omega.
\end{cases}
\end{equation}
In the case $F\equiv 0$, $b=0$,\footnote{Subject to some technical compatibility conditions for the initial and boundary values. See \S\ref{sec: prelim} for details.} this linear problem has a unique very weak solution in the regularity class $L^4_t L^6_x$, such that $\|U\|_{L^4_t L^6_x} \lesssim \|a\|_{L^4_t L^4_x}$; see \cite{FGS, FKS}. Thus, under the smallness assumption $\|U\|_{L^4_t L^4_x} \leq \e < \overline{\e}$, by a fixed point argument applied to the mild solution (\emph{i.e.}, weak solution represented as an integral of the forcing term and initial-boundary data via the Stokes semigroup) to the Stokes problem, we may obtain a unique weak solution in $L^4_t L^6_x$ for the Navier--Stokes equations, \emph{i.e.}, Equation~\eqref{stokes} with the forcing term $F = U \otimes U$. Then, by a weak strong uniqueness argument, this unique weak solution coincides with $U$. The contractivity of the solution operator needed to run the fixed point argument is guaranteed by the smallness of the universal constant $\overline{\e}$. Moreover, the region where we apply the above arguments --- which is required to have $L^4_x$-small initial data and $L^4_tL^4_x$-small boundary data --- can be chosen in view of the $L^4_tL^4_x$-smallness of $U$ in $\Omega$ via pigeonholing over the spheres centred at $(t,x)=(0,\zero)$. This process is referred to as spatial-temporal slicing in \cite{ABP}. Once we promote the regularity of $U$ from $L^4_t L^4_x$ to $L^4_t L^6_x$, the latter falls within the range of indices entailed by the Lady\v{z}enskaja--Prodi--Serrin regularity criterion \cite{Lady, prodi, Ser}, which then bootstraps the interior regularity of $U$ to $L^\infty_t L^\infty_x$.

The following question is posed in \cite{ABP}: {\bf Can we establish an epsilon regularity theorem \emph{up to the boundary} for the Navier--Stokes equations based on the weak strong uniqueness theory for very weak solutions?} Let us quote \cite[Remark~4.2]{ABP}:

\begin{quote}
It remains an open problem as to whether such a proof can be done for
establishing epsilon regularity results near a boundary. Indeed, the linear results of Farwig \textit{et al.} \cite{FKS} and
of Fabes \textit{et al.} \cite{FLR2} ask for smoothness of the domain $\Omega$, see for instance Theorem 2.2 above. However, we
are unable to carry out the slicing procedure of Step~1 [in the proof of Theorem~B] above near a smooth boundary.
\end{quote}

To address this problem, two difficulties must inevitably be encountered. They both echo the above quoted remark in that an effective slicing or foliation of the fluid domain arbitrarily close to the boundary remains elusive.
\begin{enumerate}
    \item 
    First, assuming that $U$ has small $L^4_tL^4_x$-norm in $]-1,0[\times\Omega$ does not imply that the trace of $U$ on $\p\Omega$ has  small $L^4_tL^4_x$-norm in $]-1,0[\times \p\Omega$. Indeed, for a finite-energy weak solution $U \in \left(L^\infty_t L^2_x \cap L^2_t \dot{W}^{1,2}_x\right)\left(]-1,0[\times\Omega\right)$, one may only infer from the trace inequality that $U \in L^2_tL^4_x\left(]-1,0[\times \p\Omega\right)$, which is a weaker result than $U \in L^4_tL^4_x\left(]-1,0[\times \p\Omega\right)$.\footnote{Throughout, the notation $u \in L^p_tL^q_x(]t_1,t_2[\times \po)$ and the like are  understood in the sense of trace.} 
    \item 
    Second, to perform the aforementioned slicing argument near the boundary, merely slicing over concentric spheres in space is no longer sufficient, as it only leads to interior regularity results. On the other hand, fixing a putative singular point on the boundary, we cannot simply slice by hemispheres centred at this point, because by Item~(1) above we do not have $L^4_tL^4_x$-smallness of the trace of $U$ on $\po$.
\end{enumerate}

The goal of this paper is to prove the  boundary epsilon regularity theorem as follows. Other boundary regularity results for the Navier--Stokes equations have been obtained in Seregin \cite{S03}, Seregin--Shilkin--Solonnikov \cite{sss}, Albritton--Barker \cite{ab}, and Breit \cite{breit}, etc.

\begin{theorem}\label{thm: main}
Let $\Omega \subset \R^3$ be a smooth bounded domain. There exists a constant $\e_0>0$ such that the following holds. Let $U \in \sm\left(]-1,0[\times\Omega\right)$ be a finite-energy weak solution to the Navier--Stokes equations. Assume that for any $T\in ]-1,0[$ one has $U \in C^\infty\left(]-1,T[ \times {\Omega}\right)$, and that  $\left\|U\right\|_{L^4\left(]-1,0[\times\Omega\right)}<\e_0.$ Then $U \in C^\infty\left(]-1,0]\times\co\right)$. In addition, $\|U\|_{C^\infty\left(]-1,0]\times\co\right)} \leq C'\left\|U\right\|_{L^4\left(]-1,0[\times\Omega\right)}$ for a constant $C'$ depending only on $M$ and $\Omega$. 
 
\end{theorem}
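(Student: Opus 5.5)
We follow the weak-strong uniqueness strategy of Theorem~\ref{thm: ABP}, but we slice along surfaces parallel to $\po$ instead of spheres. Since $\Omega$ is smooth, there is $\delta_0=\delta_0(\Omega)>0$ such that the signed distance function $d(x)=\mathrm{dist}(x,\po)$ is smooth on $\{0\le d<2\delta_0\}$. Each level set $\Sigma_s=\{d=s\}$, $0<s<\delta_0$, is then a smooth closed surface that bounds the subdomain $\Omega_s=\{d>s\}$. Moreover, the family $\{\Omega_s\}$ has uniformly controlled $C^{2,1}$ geometry, so the constants of Farwig--Galdi--Sohr/Farwig--Kozono--Sohr for the very weak Stokes problem \eqref{stokes} on $\Omega_s$ can be taken independent of $s\in[\delta_0/2,\delta_0]$. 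By the coarea formula,
\begin{equation*}
\int_{\delta_0/2}^{\delta_0}\int_{-1}^0\int_{\Sigma_s}|U|^4\,\dd\haus\,\dd t\,\dd s\lesssim \|U\|^4_{L^4(]-1,0[\times\Omega)}<\e_0^4 .
\end{equation*}
Pigeonholing therefore gives some $s$ with $\|U\|_{L^4_tL^4_x(]-1,0[\times\Sigma_s)}\lesssim \delta_0^{-1/4}\e_0$. Similarly, slicing in time gives $t_1\in]-1,-1/2[$ with $\|U(t_1)\|_{L^4(\Omega)}\lesssim\e_0$. Because $U$ is smooth before time $0$, these traces are well defined, and the compatibility condition $\int_{\Sigma_s}U\cdot\n=0$ holds automatically by the divergence-free condition.

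On $]t_1,0[\times\Omega_s$ we then run the fixed-point argument. We write the solution as $V=V_a+V_b+\mathcal{S}(V\otimes V)$, where $V_a$ is the very weak Stokes solution with boundary datum $a=U|_{\Sigma_s}$, $V_b$ is the one with initial datum $b=U(t_1)$, and $\mathcal{S}$ is the Stokes solution operator with forcing $\na\cdot F$. The linear estimates give $\|V_a\|_{L^4_tL^6_x}\lesssim\|a\|_{L^4_tL^4_x}$ and $\|V_b\|_{L^4_tL^6_x}\lesssim\|b\|_{L^4}$. The bilinear term satisfies $\|\mathcal{S}(V\otimes W)\|_{L^4_tL^6_x}\lesssim \|V\|_{L^4_tL^6_x}\|W\|_{L^4_tL^6_x}$, using $L^2_tL^3_x$ forcing and the maximal-regularity and Sobolev embedding. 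So for $\e_0$ small a contraction gives a unique $V\in L^4_tL^6_x$. Weak-strong uniqueness for very weak solutions shows $V=U$ on $]t_1,0[\times\Omega_s$; here we use that $U$ is smooth and coincides with $V$ up to each $T<0$, and that the class $L^4_tL^6_x$ is Serrin. The interior part of the conclusion then follows as in Theorem~\ref{thm: ABP}.

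The genuine obstacle is the boundary layer $\{d<s\}$, where the region $\Omega_s$ sees nothing. There we must use the actual boundary condition of $U$ on $\po$, which the statement implicitly takes to be no-slip. So we slice the layer $\Omega\setminus\overline{\Omega_{s}}$, which is a smooth shell bounded by $\po$ and $\Sigma_s$, rather than $\Omega_s$. On its inner boundary $\po$ the trace is $0$, and on its outer boundary $\Sigma_s$ it has small $L^4_tL^4_x$ norm by the choice of $s$. For the whole problem the cleaner choice is to work directly on $]t_1,0[\times\Omega$ with boundary datum $0$ on $\po$ and only the initial slice $t_1$. If $U$ vanishes on $\po$, the only data needed are the small initial datum $\|U(t_1)\|_{L^4(\Omega)}\lesssim\e_0$ and the forcing $U\otimes U$. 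The construction is then the same contraction on the fixed smooth domain $\Omega$, with constants depending only on $\Omega$. The level-set slicing is needed only to justify the compatibility conditions and to localize where the smoothness hypothesis before time $0$ enters.

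After this, $U\in L^4_tL^6_x(]t_1,0[\times\Omega)$ with norm $\lesssim\e_0$. Boundary Serrin-type regularity for the Dirichlet Stokes system, bootstrapped through Solonnikov's $L^p$ estimates and higher-order versions, then gives $U\in C^\infty(]-1,0]\times\co)$. Because every step is linear or small-quadratic in $\|U\|_{L^4}$, the resulting bound takes the form $C'\|U\|_{L^4}$. The expected difficulty is verifying the weak-strong uniqueness step up to $\po$, namely that $U$ is an admissible test class near the boundary at times close to $0$. We would handle this by proving energy equality for $U-V$ on $]t_1,T[$ for $T<0$ and letting $T\uparrow0$.
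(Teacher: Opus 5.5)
Your final argument is correct once the no-slip condition $U=0$ on $]-1,0[\times\po$ is made explicit, but it takes a genuinely different route from the paper. The difference is exactly where the paper's new idea lives.

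\textbf{Your route.} You combine the zero trace on $\po$ with the \emph{global} hypothesis $\|U\|_{L^4(]-1,0[\times\Omega)}<\e_0$ and drop spatial slicing altogether. After one time slice you are solving the small-$L^4$-data problem on the fixed domain $\Omega$ with the Stokes semigroup. No Bogovski\u{\i} extension or Dirichlet heat-kernel correction of the initial datum is needed, since $U(t_1)\in L^4_\sigma(\Omega)$ already. Regularity up to $\po$ is then inherited directly from the mild solution.

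\textbf{The paper's route.} The paper deliberately avoids the trace of $U$ on $\po$ (Item~(1) of its introduction).
\begin{enumerate}
\item It builds the clam foliation of Proposition~\ref{prop, clam}.
\item It pigeonholes to find a convex $\mathscr{R}\subset\Omega$, tangent to $\po$ only at $x_\star$, with $\|U\|_{L^4(]-1,0[\times\p\mathscr{R})}$ small.
\item It solves the very weak Stokes problem on $\mathscr{R}$ with datum $U|_{\p\mathscr{R}}$.
\item It bootstraps near $x_\star$ with the half-ball version of Theorem~\ref{thm: LPS}, uniformly in $x_\star$.
\end{enumerate}

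\textbf{What each buys.} The paper's scheme is local in $x_\star$ and never uses the trace on $\po$ to build the very weak solution. Yours is much shorter. It also sidesteps the passage from $L^4_tL^6_x$ control on $\mathscr{R}$ to control on a boundary neighbourhood. Near $x_\star$, $\mathscr{R}$ lies above a paraboloid, so it contains no set of the form $\B_\rho(x_\star)\cap\Omega$.

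\textbf{Your no-slip assumption costs no generality.} Some boundary condition is unavoidable. Take $h$ harmonic in $\Omega$ with $h|_{\po}\in H^{3/2}(\po)$ not smooth, and set $U=\e\na h$, $P=-\frac{\e^2}{2}|\na h|^2$. This is a steady finite-energy solution, smooth in $\Omega$, with arbitrarily small $L^4$ norm, yet not smooth up to $\po$. Moreover, the boundary version of Theorem~\ref{thm: LPS} invoked in the paper's Step~8 already presupposes a no-slip condition on $\po$. Once that condition is granted, the clam construction is not needed for the statement as written.

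\textbf{Two small repairs.}

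First, the step you flag as the main difficulty does not need a truncation $T\uparrow0$ or smoothness of $U$ near $\po$. Handle it as in the paper's weak--strong uniqueness step:
\begin{enumerate}
\item Since $U\in L^2_tW^{1,2}_0$ and $U\otimes U\in L^2_{t,x}$, $U$ is a very weak solution on $]t_1,0[\times\Omega$ with zero boundary datum.
\item Hence $Z=V-U$ is an $L^2$ very weak solution of the Stokes system with zero data and forcing $\na\cdot F$, where $F=U\otimes U-V\otimes V\in L^2_{t,x}$.
\item Proposition~\ref{prop: uniqueness and energy} gives the energy identity for $Z$ on all of $]t_1,0[$.
\item Gronwall with $\|V\|_{L^6_x}^4\in L^1_t$ then gives $Z\equiv0$.
\end{enumerate}
In your route, the smoothness hypothesis before time $0$ is not used at all.

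Second, a single slice $t_1\in]-1,-\frac12[$ yields smoothness and the linear bound only on $]t_1,0]\times\co$, not on $]-1,0]\times\co$. A bound uniform up to $t=-1$ cannot hold in general. The paper's proof, with $t_0\in]-1,-\frac78[$, has the same restriction.

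Finally, the level-set slicing in your first paragraphs plays no role in the final argument and can be dropped.
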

 
The key tool for the proof of  Theorem~\ref{thm: main} is a new spatial slicing construction in Proposition~\ref{prop, clam}, which may be of independent interest.  Assume $\po = \p\R^3_+$ and fix any $x_\star \in \po$. One may find a paraboloid tangential to the boundary only at $x_\star$. Take a smooth convex region $\mathscr{V}_0$ enclosed by the paraboloid from  below and some spherical cap from above. If we contract it to $\{x_\star\}$ along the geodesics connecting $x_\star$ and the points on $\p\mathscr{V}_0$, we obtain a foliation of $\mathscr{V}_0$ such that each folium $\Sigma_s$ is a smooth convex region, whose bottom part is a paraboloid that touches $\p\Omega$ at a single point $x_\star$ in a $C^1$-tangential manner. In addition, it is clear that the annulus between the contracted regions at time $s=1/4$ and $s=1/2$ has large volume, due to the convexity of $\mathscr{V}_0$ and the regions obtained along the contraction. Hence, by pigeonholing over this foliation, we may find some $s \in [0,1[$ such that the trace of $U$ over $\Sigma_s$ has a small $L^4_tL^4_x$-norm. 

The region enclosed by $\Sigma_s$ has zero distance from the boundary $\po$, and all the estimates involved in the preceding process are uniform in the base point $x_\star \in \po$. The above construction thus allows us to deduce epsilon regularity results up to the boundary, namely Theorem~\ref{thm: main}.

\smallskip
\noindent
{\bf Organisation.} The remaining parts of the paper are organised as follows. In \S\ref{sec: prelim} we collect some background knowledge on the theory of very weak solutions and the Lady\v{z}enskaja--Prodi--Serrin regularity criterion. In \S\ref{sec: slicing} we present the crucial novel slicing technique in this paper. The Main Theorem~\ref{thm: main} is proved in \S\ref{sec: proof}. Finally, a few concluding remarks are given in \S\ref{sec: concluding remarks}.

\section{Preliminaries}\label{sec: prelim}

The notion of \emph{very weak solutions} to the Stokes equations is central to our development. The following formulation is taken from Amann \cite{ama1, ama2}, Farwig--Galdi--Sohr \cite{FGS}, and  Farwig--Kozono--Sohr \cite{FKS}, whereof the case $F = U\otimes U$ (\emph{e.g.}, for $U \in L^2_tL^2_x\left(]t_1,t_2[\times\Omega\right)$) corresponds to the Navier--Stokes equations. 

\begin{definition}\label{def: very weak sol}
Let $\Omega$ be a domain in $\R^3$ and $-\infty \leq t_1 < t_2 \leq +\infty$. Given $a \in L^1\left(]t_1,t_2[\times\po;\R^3\right)$, $b \in L^1\left(\{t_1\} \times \Omega;\R^3\right)$, and $F \in L^1\left(]t_1,t_2[\times\Omega; \R^{3\times 3}\right)$ satisfying the compatibility conditions 
\begin{align*}
    \int_{\po} a(t,x)\cdot\n(x)\,\dd\haus(x) = 0 \quad\text{and}\quad \int_\Omega b(x)\cdot\na \psi(x)\,\dd x = 0\quad\text{for all } t \in ]t_1,t_2[\text{ and } \psi \in C^\infty(\R^3).
\end{align*}
An integrable function $U: ]t_1,t_2[\times\Omega \to \R^3$ is said to be a very weak solution to the initial-boundary value problem of the forced Stokes problem:
\begin{equation*}
\begin{cases}
    &\p_t U -\Delta U + \na P = \na \cdot F,\qquad \na \cdot U=0\qquad \text{ in }]t_1,t_2[\times\Omega,\\
    &U = a \qquad \text{ on } ]t_1,t_2[ \times \po,\\
    &U=b\qquad \text{ on } \{t_1\} \times \Omega,
\end{cases}
\end{equation*}
if for all test functions $\Phi \in C^1_0\left([t_1,t_2[; C^2_{0,\sigma}\left(\co\right)\right)$ and $q \in C^\infty\left([t_1,t_2]\times\co;\R\right)$, it holds that
\begin{align*}
&-\int_{t_1}^{t_2}\int_\Omega U \cdot \left\{ \p_t\Phi + \Delta\Phi +\na q \right\}\,\dd x\,\dd t =  -\int_{t_1}^{t_2}\int_\Omega F:\na\Phi\,\dd x\,\dd t\\
 &\qquad + \int_\Omega b(x)\cdot\Phi(t_1,x)\,\dd x -  \int_{t_1}^{t_2}\int_\po \left\{a\cdot\left(\na\Phi\cdot\n\right)  + \left(a \cdot \n\right) q\right\}\,\dd\haus(x)\,\dd t.
\end{align*}
Here and hereafter, $C^{2}_{0,\sigma}\left(\co\right):=\left\{u \in C^2\left(\co;\R^3\right):\, u\big|_\po \equiv 0,\,\na\cdot u  = 0 \right\}$. 
\end{definition}

The following existence and uniqueness theorem has been established in \cite[Lemma~1.2]{FKS}; see also \cite[Theorem~2.2]{ABP}. In this paper, we shall take $s=4$ and $q=6$. 
\begin{proposition}\label{prop: very weak}
    Let $s,q \in [4,\infty[$ with $\frac{2}{s} + \frac{3}{q} = 1$. Let $a \in L^s_tL^{\frac{2q}{3}}_x(]t_1,t_2[ \times \Omega)$ and $F\equiv 0$, $b\equiv 0$. Assume that $a$ satisfies the compatibility condition in Definition~\ref{def: very weak sol}. Then there exists a unique very weak solution $U \in  L^s_tL^q_x(]t_1,t_2[ \times \Omega)$ to the Stokes problem~\eqref{stokes}. In addition,
    \begin{align}
        \|U\|_{L^s_tL^q_x(]t_1,t_2[ \times \Omega)} \leq C\|a\|_{L^s_tL^{\frac{2q}{3}}_x(]t_1,t_2[ \times \Omega)}
    \end{align}
    for some constant $C$ depending only on $q$, $t_1$, $t_2$, and $\Omega$. 
\end{proposition}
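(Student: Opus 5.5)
This proposition is quoted from \cite[Lemma~1.2]{FKS} (see also \cite[Theorem~2.2]{ABP}); I would verify it by duality, as is standard for very weak solutions. Write the boundary datum as $a$ on $]t_1,t_2[\times\po$, so that in the case $s=4$, $q=6$ the required datum lies in $L^4_tL^4_x$ of the lateral boundary. I would construct $U$ by transposition. For each $G\in C^\infty_0(]t_1,t_2[\times\Omega)$, solve the backward Stokes problem
\begin{align*}
-\p_t\Phi-\Delta\Phi+\na q = G,\qquad \na\cdot\Phi=0,\qquad \Phi|_{\po}=0,\qquad \Phi(t_2)=0 .
\end{align*}
Then define $U$ through the linear functional
\begin{align*}
\langle U,G\rangle := -\int_{t_1}^{t_2}\int_{\po}\left\{a\cdot(\na\Phi\cdot\n)+(a\cdot\n)q\right\}\dd\haus\,\dd t .
\end{align*}
By construction, this identity is exactly the weak formulation of Definition~\ref{def: very weak sol} with $F\equiv0$ and $b\equiv0$. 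The compatibility condition $\int_{\po}a\cdot\n=0$ makes the pairing insensitive to the additive constant in the pressure $q$.

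The key estimate is a maximal regularity bound for the adjoint problem, combined with traces. Let $s',q'$ denote the conjugate exponents. For $G\in L^{s'}_tL^{q'}_x$, the Solonnikov / Giga--Sohr $L^p$--$L^r$ maximal regularity of the Stokes operator on a $C^{2,1}$ domain gives
\begin{align*}
\|\p_t\Phi\|_{L^{s'}_tL^{q'}_x}+\|\Phi\|_{L^{s'}_tW^{2,q'}_x}+\|\na q\|_{L^{s'}_tL^{q'}_x}\lesssim\|G\|_{L^{s'}_tL^{q'}_x}.
\end{align*}
The next step is to control the boundary traces of $\na\Phi$ and $q$ in the space dual to that of $a$, namely $L^{s'}_tL^{(2q/3)'}_x(\po)$.

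The obstacle is that the spatial trace theorem alone loses too much integrability. Mixed space--time interpolation of $W^{1,s'}_tL^{q'}\cap L^{s'}W^{2,q'}$, in the style of Amann or Farwig--Kozono--Sohr, gains exactly the missing integrability. Concretely, one uses fractional time regularity of $\na\Phi$ together with the trace $W^{1-\theta,q'}(\Omega)\to L^{r}(\po)$ for the appropriate $\theta$. The scaling relation $\frac2s+\frac3q=1$ is precisely the balance that makes this embedding land in $L^{s'}_tL^{(2q/3)'}_x$. The pressure trace is handled in the same way via $q\in L^{s'}_tW^{1,q'}_x$ plus interpolation.

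With this bound in hand, $|\langle U,G\rangle|\le C\|a\|\,\|G\|_{L^{s'}L^{q'}}$, so Riesz representation yields $U\in L^s_tL^q_x$ with the claimed estimate. Uniqueness follows from the same duality. If $U$ is a very weak solution with $a=0$, test against the $\Phi$ constructed from an arbitrary $G$: this gives $\int U\cdot G=0$ for all $G$, hence $U=0$. One subtlety is that $\Phi$ is not compactly supported in time near $t_1$, but the vanishing initial datum $b=0$ makes the initial term drop out. Another is that the $\Phi$ built this way have only Sobolev regularity, not the $C^1_0([t_1,t_2[;C^2_{0,\sigma}(\co))$ required of test functions in Definition~\ref{def: very weak sol}. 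This is resolved by approximating $\Phi$ by admissible test functions and using the continuity of all terms in the norms above. The constant depends on $t_1,t_2$ only through the finite time interval appearing in the maximal regularity estimate.
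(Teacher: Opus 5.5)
Your overall route is the right one: define $U$ by transposition against the backward Stokes problem, using Solonnikov/Giga--Sohr maximal regularity in $L^{s'}_tL^{q'}_x$. This is the standard argument behind the result that the paper simply quotes from \cite[Lemma~1.2]{FKS}; the paper gives no proof of its own. However, the step you single out as the crux is misdiagnosed, and the mechanism you propose there would not give the bound.

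\textbf{There is no loss in the spatial trace.} For $1<p<3$ the trace maps $W^{1,p}(\Omega)$ into $L^{2p/(3-p)}(\po)$: first into $W^{1-1/p,p}(\po)$, then by Sobolev embedding on the two-dimensional surface $\po$. With $p=q'$,
\begin{equation*}
\frac{3-q'}{2q'}=\frac32\Big(1-\frac1q\Big)-\frac12=1-\frac{3}{2q}=\frac{1}{(2q/3)'} .
\end{equation*}
For $q=6$ this reads $W^{1,6/5}(\Omega)\hookrightarrow L^{4/3}(\po)$, and indeed $(2q/3)'=4/3$. Hence $\|\na\Phi\|_{L^{s'}_tL^{(2q/3)'}_x(\po)}\lesssim\|\Phi\|_{L^{s'}_tW^{2,q'}_x}$. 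The same bound holds for the trace of the adjoint pressure in terms of $\|\na q\|_{L^{s'}_tL^{q'}_x}$, after normalising it to mean zero, which is harmless by the compatibility condition. The time exponent $s'$ comes directly from maximal regularity.

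\textbf{The interpolation you describe goes the wrong way.} Using $\na\Phi\in W^{\theta/2,s'}_tW^{1-\theta,q'}_x$ with $\theta>0$ only raises the time integrability, which you do not need. It lowers the spatial integrability of the boundary trace strictly below $(2q/3)'$, which you cannot afford. As written, that paragraph does not produce the estimate; the plain $\theta=0$ trace bound you dismissed does. Consequently the relation $\frac2s+\frac3q=1$ is not what makes this linear estimate work. The duality argument gives it for all $1<s<\infty$ and $\frac32<q<\infty$. The scaling relation matters only later in the paper, for the fixed point in $L^4_tL^6_x$ and the Lady\v{z}enskaja--Prodi--Serrin step.

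\textbf{Two smaller corrections.}
\begin{enumerate}
\item With your adjoint convention $-\p_t\Phi-\Delta\Phi+\na q=G$, the test pressure in Definition~\ref{def: very weak sol} is $-q$. So the pressure term must flip sign:
\begin{equation*}
\langle U,G\rangle=\int_{t_1}^{t_2}\int_{\po}\left\{-a\cdot(\na\Phi\cdot\n)+(a\cdot\n)q\right\}\dd\haus\,\dd t .
\end{equation*}
\item ``By construction'' only gives the weak identity for pairs that arise from some $G$. To get it for every admissible pair $(\Phi,\tilde q)$, set $G:=-(\p_t\Phi+\Delta\Phi+\na\tilde q)\in L^{s'}_tL^{q'}_x$. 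Then use uniqueness for the adjoint problem: the pressure is determined up to a function of $t$, which the compatibility condition kills. Finally use the continuity of the extended functional.
\end{enumerate}

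With these fixes, and the direct trace estimate in place of the interpolation paragraph, your proof goes through.
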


We also have the following result on the uniqueness of $L^2_tL^2_x$-very weak solutions and the energy identity. See \cite[lemma~2.5 and Remark~2.6]{ABP} and \cite[IV, Theorems~2.3.1 and 2.4.1]{sohr}.
\begin{proposition}\label{prop: uniqueness and energy}
In the setting of Definition~\ref{def: very weak sol}, let $U \in L^2_tL^2_x\left(]t_1,t_2[\times\Omega\right)$ be a very weak solution to the linear Stokes problem~\eqref{stokes}. 
\begin{enumerate}
    \item 
    If $F$, $a$, $b$ are all constantly zero, then $U \equiv 0$ on $]t_1,t_2[\times\Omega$.
    \item 
    If $a$ and $b$ are constantly zero, and if $F \in \left(]t_1,t_2[\times\Omega; \R^{3\times 3}\right)$, then $U \in C^0_tL^2_{\sigma,x}\left([t_1,t_2] \times \Omega\right) \cap L^2_t W^{1,2}_{0,\sigma,x}\left(]t_1,t_2[\times\Omega\right)$ with the energy identity:
    \begin{equation*}
    \|U(t,\bullet)\|^2_{L^2_x(\Omega)} + 2\int_{t_1}^t\int_\Omega |\na U(x,s)|^2\,\dd x \,\dd s = -2\int_{t_1}^t \int_\Omega F: \na U\,\dd x\,\dd s\qquad \text{for all } t \in [t_1,t_2].
    \end{equation*}
\end{enumerate}
\end{proposition}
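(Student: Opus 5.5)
The plan is to prove item (1) by a duality argument against the backward (adjoint) Stokes problem. Item (2) then follows by building an energy-class solution directly and using (1) to identify it with $U$. Throughout, $\Omega$ is smooth and bounded, so the classical regularity theory of the Stokes operator $\stokes$ with Dirichlet conditions (Solonnikov; see Sohr's monograph) is available.

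\emph{Step 1: the constraints.} First I would show that $U$ lies in $L^2_t L^2_{\sigma,x}$, i.e.\ is divergence-free with vanishing normal trace. Take $\Phi\equiv 0$ in Definition~\ref{def: very weak sol} (this $\Phi$ is admissible). With $F,a,b\equiv 0$ the identity reduces to $\int_{t_1}^{t_2}\int_\Omega U\cdot\na q\,\dd x\,\dd t=0$ for all $q\in C^\infty([t_1,t_2]\times\co)$. By the Helmholtz decomposition, $U(t,\cdot)$ is then orthogonal to all gradients for a.e.\ $t$, hence lies in $L^2_\sigma(\Omega)$ for a.e.\ $t$.

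\emph{Step 2: duality.} Fix a smooth, divergence-free, compactly supported $g\in C^\infty_c(]t_1,t_2[\times\Omega;\R^3)$. Solve the backward Stokes problem
\begin{equation*}
-\p_t\Phi-\Delta\Phi+\na\pi=g,\qquad \na\cdot\Phi=0,\qquad \Phi|_{\po}=0,\qquad \Phi(t_2)=0.
\end{equation*}
Since $g$ vanishes near $t=t_2$, we have $\Phi\equiv 0$ there. Therefore the compatibility conditions at $t_2$ hold to every order, and parabolic Stokes regularity in a smooth domain gives $\Phi\in C^\infty([t_1,t_2]\times\co)$ and $\pi\in C^\infty([t_1,t_2]\times\co)$. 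Hence $\Phi$ is an admissible test field. Inserting it with $q=-\pi$ yields $\int\!\!\int U\cdot g=0$. Since such $g$ are dense in $L^2_tL^2_{\sigma,x}$, Step~1 gives $U\equiv 0$.

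I expect this admissibility and regularity check to be the only genuinely delicate point. If full smoothness up to $t_1$ is not readily quotable, I would instead prove $\Phi\in C^1_tC^2_x$ via the analytic Stokes semigroup, writing $\Phi(t)=\int_t^{t_2}e^{-(s-t)\stokes}g(s)\,\dd s$ and using $g(s)\in D(\stokes^k)$ for all $k$. Alternatively I would approximate $\Phi$ by admissible fields.

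\emph{Step 3: item (2).} Now $a,b\equiv 0$ and $F\in L^2_tL^2_x$. I would construct the Leray–Hopf-type solution $V\in C^0_tL^2_{\sigma,x}\cap L^2_tW^{1,2}_{0,\sigma,x}$ of the forced Stokes problem with zero initial data. This can be done by Galerkin approximation in eigenfunctions of $\stokes$, or via the Stokes semigroup applied to $\mathbb{P}\na\cdot F\in L^2_tW^{-1,2}_x$. Then $\p_tV\in L^2_t(W^{1,2}_{0,\sigma})^*$, and the Lions–Magenes lemma justifies testing the equation with $V$ itself. This gives the energy identity
\begin{equation*}
\|V(t)\|_{L^2_x}^2+2\int_{t_1}^t\int_\Omega|\na V|^2=-2\int_{t_1}^t\int_\Omega F:\na V.
\end{equation*}
Integrating by parts against an admissible $\Phi$ shows that $V$ is also a very weak solution in the sense of Definition~\ref{def: very weak sol}; the boundary terms vanish because $V|_{\po}=0$, and the pressure terms vanish because $V$ is solenoidal. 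Therefore $U-V\in L^2_tL^2_x$ is a very weak solution with zero data, and item (1) gives $U=V$. This transfers both the regularity class and the energy identity to $U$.
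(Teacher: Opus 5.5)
Your proposal is correct. The paper gives no proof of its own and instead cites Albritton--Barker--Prange (Lemma~2.5 and Remark~2.6) and Sohr (Ch.~IV), which use the same scheme: item~(1) by duality against a smooth solution of the backward Stokes problem with compactly supported solenoidal forcing (your Step~1 supplies the needed membership $U\in L^2_tL^2_{\sigma,x}$), and item~(2) by identifying $U$ with the energy-class solution via item~(1). The admissibility step you flag does go through: for compactly supported solenoidal $g$ one has $\stokes^k g=(-\Delta)^k g$, so $g(s)\in D(\stokes^k)$ for all $k$, and the semigroup formula gives $\Phi$ (hence also $\pi$) smooth on $[t_1,t_2]\times\overline{\Omega}$ and vanishing near $t_2$.
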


Finally, recall the following celebrated result by Lady\v{z}enskaja--Prodi--Serrin \cite{Lady, prodi, Ser}.\footnote{Throughout this paper, $\B_r(x_0)$ is the Euclidean ball in $\R^3$ centred at $x_0$ with radius $r$, and $\B_r^+(x_0) := \B_r(x_0) \cap \R^3_+$. The boldface $\zero$ always denotes the origin in $\R^3$.}
\begin{theorem}\label{thm: LPS}
    Let $U \in \mathscr{S}_M\left(]-1,0[ \times \B_{r_0}(x_0)\right)$ be a finite-energy weak solution to the Navier--Stokes equations. Suppose for some $s \in [2,\infty[$ and $r \in ]3,\infty]$ that
    \begin{align*}
        U \in L^s_tL^r_x\left(]-1,0[\times\B_{r_0}(x_0)\right)\qquad\text{with } \frac{2}{s}+\frac{3}{r} \leq 1.
    \end{align*}
    Then $U$ is smooth with respect to the space variables in $\B_{r_0/2}(x_0)$. The boundary regularity version of this result also holds, \emph{i.e.}, with $\B_{r_0}^+(x_0)$ and $\B_{r_0/2}^+(x_0)$ in place of $\B_{r_0}(x_0)$ and $\B_{r_0/2}(x_0)$, respectively.
\end{theorem}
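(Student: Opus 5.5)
The plan is a bootstrap: first upgrade the finite-energy weak solution to a strong solution, then iterate parabolic regularity for the Stokes system, treating $U\cdot\na U$ as a forcing term. By a parabolic rescaling and translation I will assume $x_0=\zero$ and $r_0=1$. It suffices to treat the endpoint case $\frac2s+\frac3r=1$ with $r<\infty$: the case $r=\infty$ reduces to it by interpolation against $L^\infty_tL^2_x$, and the strict inequality follows from the endpoint after shrinking the time interval. I will fix a nested sequence of radii $1>\rho_1>\rho_2>\dots>1/2$ and cutoffs $\phi_k\in C^\infty_c$ with $\phi_k\equiv1$ on $\B_{\rho_{k+1}}$ and $\operatorname{supp}\phi_k\subset \B_{\rho_k}$. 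In the boundary version the cutoffs are taken in $\B_{\rho_k}$ but not required to vanish on $\{x_3=0\}$.

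\textbf{Step 1 (local pressure and strong solution).} Localise by writing $V=\phi_1 U$. Then $V$ solves a Stokes system in a fixed bounded smooth domain $D$, where $D=\B_{\rho_1}$ or a smoothed half-ball. It has zero boundary values on $\p D$ and a right-hand side
\[
-\phi_1 U\cdot\na U + (\text{terms involving }\na\phi_1,\ U,\ P).
\]
The divergence defect $U\cdot\na\phi_1$ is corrected with a Bogovski\u{\i} operator. The pressure is handled by the standard local decomposition $P=P_1+P_2$. Here $P_1$ solves the Stokes/Neumann problem with data $U\otimes U$, so Calder\'on--Zygmund gives $\|P_1\|_{L^{s/2}_tL^{r/2}_x}\lesssim \|U\|_{L^s_tL^r_x}^2$. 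The piece $P_2$ is harmonic in space, and hence smooth in $x$ on interior balls. In the boundary case $P_2$ is controlled through the Stokes system with the no-slip condition, via Seregin's local estimates \cite{S03}.

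Next I run the energy estimate for $\na V$: test the equation with $\stokes V$ (resp.\ $-\Delta V$ in the interior). The key nonlinear term satisfies
\[
\int |U||\na U||\stokes V| \le \|U\|_{L^r}\|\na V\|_{L^{2r/(r-2)}}\|\stokes V\|_{L^2} + \text{lower order}.
\]
Gagliardo--Nirenberg gives $\|\na V\|_{L^{2r/(r-2)}}\lesssim \|\na V\|_{L^2}^{1-3/r}\|\na^2 V\|_{L^2}^{3/r}$. Young's inequality with exponent $s=2r/(r-3)$ then closes a Gr\"onwall inequality with weight $\|U\|_{L^r}^s\in L^1_t$. Because the solution is only weak, this step is made rigorous by weak-strong uniqueness: construct the strong (Galerkin or mild) solution with the same localized data and identify it with $V$. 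For the identification I use the energy inequality together with the Serrin condition, which is exactly the regularity class in which the cross term $\int (U\cdot\na) w\cdot U$ is well defined. The outcome is $U\in L^\infty_tW^{1,2}_x\cap L^2_tW^{2,2}_x$ on $]-1+\delta,0[\times\B_{\rho_2}$ (or $\B^+_{\rho_2}$).

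\textbf{Step 2 (bootstrap).} With $U\in L^\infty_t H^1_x\subset L^\infty_tL^6_x$, the nonlinearity $U\cdot\na U$ lies in $L^2_tL^{3}_x$-type spaces. Maximal $L^p_tL^q_x$ regularity for the Stokes system then raises integrability. In the half-ball I use Solonnikov's coercive estimates for the Stokes problem with Dirichlet data on the flat part. Each application gains one spatial derivative (and half a time derivative) at the cost of shrinking $\rho_k\to\rho_{k+1}$. Differentiating tangentially recovers all tangential derivatives. Normal derivatives are then recovered from the equation and $\na\cdot U=0$, in the usual way for flat boundaries. After finitely many steps for each order one gets $U\in L^\infty_tC^k_x$ on $\B_{1/2}$ (resp.\ $\B^+_{1/2}$) for all $k$, which is spatial smoothness.

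\textbf{Main obstacle.} I expect the hardest step to be the boundary version of Step~1. The difficulty is the pressure near the flat boundary: it is nonlocal, and $P_2$ is no longer harmonic up to $\{x_3=0\}$ in a controllable way. Indeed, spatial smoothness up to the boundary can genuinely fail for local solutions, as Kang's examples show. My first attempt is therefore to use the global structure: either $\p\B^+$ truly lies on $\po$, where $U\in\sm$ on all of $\Omega$ carries the no-slip condition, or I apply Seregin's local boundary estimates for the pressure in $L^{s/2}_tL^{r/2}_x$. If the genuinely local boundary statement resists, I will prove it under the additional assumption that $U$ is the restriction of a solution on the whole domain with zero trace. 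That assumption is all that the application in \S\ref{sec: proof} needs.
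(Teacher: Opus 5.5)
The paper gives no argument for this statement beyond citing Lady\v{z}enskaja--Prodi--Serrin and saying it ``follows essentially from energy considerations'', so your sketch has to stand on its own. It has a genuine gap at the pressure, already in the interior, and the boundary half cannot be proved at all because it is false as stated.

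\textbf{Interior: the pressure term.} Your Gagliardo--Nirenberg/Young/Gr\"onwall computation is the right energy estimate. Globally it would be fine, since $\int\na P\cdot\stokes U=0$. It is the cutoff that reintroduces the pressure: testing the equation for $V=\phi_1U$ against $\stokes V$ leaves $\int P\,\na\phi_1\cdot\stokes V\,\dd x$.

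Nothing in $\sm$ plus the Serrin hypothesis makes the harmonic part $P_2$ integrable in time. It is smooth in $x$, but in general it is only the time derivative of a bounded function. Serrin's own example $U=a(t)\na h(x)$, $P=-a'(t)h(x)-\frac12|U|^2$, with $h$ harmonic and $a\in L^\infty$ only, satisfies every hypothesis. For it, the term above is not even a locally integrable function of $t$. So ``harmonic, hence smooth in $x$'' does not close the Gr\"onwall inequality, and the ``strong solution with the same localized data'' cannot be constructed. The same term returns in the maximal-regularity step of Step~2.

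The standard repair is to remove the pressure before localizing. One option is to run exactly your estimate on $\phi\,\omega$, with $\omega=\na\times U$, which solves the pressure-free equation $\p_t\omega-\Delta\omega=\na\times(U\times\omega)$. The nonlinear term is then bounded by $\|U\|_{L^r}\|\phi\omega\|_{L^{2r/(r-2)}}\|\na(\phi\omega)\|_{L^2}$ plus lower order, with the same weight $\|U\|_{L^r}^s$. The other option is to subtract the local harmonic gradient of the pressure. In either case, $x$-derivatives of $U$ are recovered from $-\Delta U=\na\times\omega$.

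A smaller point: your reduction of $r=\infty$ by interpolation fails at $(s,r)=(2,\infty)$. Interpolating with $L^\infty_tL^2_x$ gives $L^{2q/(q-2)}_tL^q_x$, with $\frac{q-2}{q}+\frac3q=1+\frac1q>1$. The direct estimate does work there, with weight $\|U\|_{L^\infty}^2\in L^1_t$.

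\textbf{Boundary: the statement is false.} Your suspicion is right. Even granting the no-slip condition on the flat part, which the statement does not impose, no refinement of Steps~1--2 can prove it. Consider the shear flow
\[
U=(v(x_3,t),0,0),\qquad P=-g(t)\,x_1,\qquad \p_tv-\p_3^2v=g(t),\quad v(0,t)=0,\quad v(\cdot,-1)=0.
\]
This is a finite-energy no-slip solution in $]-1,0[\times\B^+_1$. It satisfies $|v|\le2\|g\|_{L^1}$, so your hypotheses hold with $r=\infty$. Yet
\[
\p_3v(0,t)=\pi^{-1/2}\int_{-1}^t g(\tau)(t-\tau)^{-1/2}\,\dd\tau ,
\]
which is unbounded as $t\uparrow t_*$ for $g(\tau)=(t_*-\tau)^{-1/2}$ near $t_*$. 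This is the Seregin--\v{S}ver\'ak shear flow; Kang's examples are of the same kind.

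This example pinpoints where your plan breaks. Seregin-type local boundary estimates need a pressure bound that is not among the hypotheses. Even with such a bound, they return $\na^2U$ and $\na P$ only with the time integrability of $P$. Likewise, recovering $\p_3^2U$ ``from the equation'' in Step~2 needs control of $\p_tU$ and $\p_3P$.

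\textbf{Your fallback.} It does not obviously help. If $U$ is merely the restriction of a zero-trace solution on $\Omega$, with the Serrin bound only on $\B_{r_0}(x_0)\cap\Omega$, the nonlocal part of the pressure near $x_0$ is still generated by the flow elsewhere. Step~1 has no handle on its time regularity. What your method does prove is the global statement: Serrin class on all of $]t_0,0[\times\Omega$, zero trace on $\po$, and the energy inequality. There the pressure drops out exactly, and weak--strong uniqueness with the global strong solution applies.

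Your claim that the fallback ``is all \S4 needs'' is also inaccurate. In \S4 the $L^4_tL^6_x$ bound is available only on the clam $\mathscr{R}$. That region meets $\po$ only at $x_\star$, and $U$ has nonzero trace on its boundary, so not even the half-ball hypothesis is supplied.
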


The proof of Theorem~\ref{thm: LPS} follows essentially from energy considerations. Note that $L^s_tL^r_x$ with $2/s+3/r=1$ are scale-invariant spaces for the solutions to the Navier--Stokes equations. The  endpoint case $(s,r)=(\infty,3)$ was established in Escauriaza--Seregin--\u{S}ver\'{a}k \cite{ESS}, which relies on delicate applications of the Carleman estimates for the backward uniqueness of the heat equation; see also Tao \cite{tao} for  quantitative refinements.

\section{Slicing by clams}\label{sec: slicing}

We construct the following foliation of a convex body compactly supported in the upper halfspace $\overline{\R^3_+}$.

\begin{proposition}\label{prop, clam}
There exists $\mathscr{V}_0 \subset \left[-\frac{1}{2},\frac{1}{2}\right] \times \left[-\frac{1}{2}, \frac{1}{2}\right]  \times [0,1]$ such that the following holds.
\begin{enumerate}
    \item 
     $\mathscr{V}_0$ is a smooth convex body rotationally symmetric with respect to the $z$-axis.
     \item 
     $\overline{\mathscr{V}_0} = \bigcup_{s \in [0,1[} \Sigma_s$ where each $\Sigma_s$ is a smooth convex surface. Also $\Sigma_s \cap \Sigma_{s'} = \{\zero\}$ and $T_\zero\Sigma_s = T_\zero \Sigma_{s'} = \p\R^3_+$ for any $s \neq s'$ in $[0,1[$.

     \item 
     There is a smooth function $z_0: \left[0,1\right] \to \left[0,\frac{1}{4}\right]$ with $z_0(s) \searrow 0$ as $s \nearrow 1$, such that $\Sigma_s\cap \left\{0 \leq z \leq z_0(s)\right\}$ is a paraboloid and $\Sigma_s \cap \left\{4z_0(s) \leq z \leq 1\right\}$ is a spherical cap for each $s \in [0,1[$. 

     \item 
     There exists a universal constant $c_0>0$ such that the  volume of $\bigcup_{s \in \left] \frac{1}{4}, \frac{1}{2} \right[} \Sigma_s \geq c_0$. 
     
\end{enumerate}

\end{proposition}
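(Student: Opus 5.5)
We build the foliation explicitly, as a one-parameter family of dilations centred at the origin $\zero$. The natural first guess — rigid homotheties $x\mapsto \lambda x$ applied to a fixed surface — fails. Homotheties send a paraboloid $z=\kappa r^2$ to $z=(\kappa/\lambda) r^2$, so distinct leaves still share the tangent plane $\p\R^3_+$ at $\zero$. But two nested convex surfaces meeting only at $\zero$ do not cover the region between them densely by homotheties unless we check monotonicity. So we will design the profile directly and verify the claims by hand.

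\textbf{Construction of the profile.} We work in cylindrical coordinates $(r,z)$ and describe the meridian curve of $\p\mathscr{V}_0$ as a smooth convex closed curve $\gamma_0$ in the half-plane $\{r\ge0\}$, symmetric under $r\mapsto -r$. We fix $\kappa$ large (say $\kappa=4$) and let $\gamma_0$ coincide with the parabola $z=\kappa r^2$ for $0\le z\le z_0(0)$. It coincides with an arc of a circle of radius $\rho\le 1/2$ centred on the $z$-axis, with top at height $\le1$, for $4z_0(0)\le z\le 1$. On the intermediate band $z_0(0)\le z\le 4z_0(0)$ it is a smooth convex interpolation with positive curvature. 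Such a gluing exists by a standard mollified-convex-combination argument: we glue the support functions, or equivalently interpolate the curvatures as functions of the tangent angle, keeping curvature positive and the closing condition. The constraints $\mathscr{V}_0\subset[-\tfrac12,\tfrac12]^2\times[0,1]$ are then arranged by choosing $z_0(0)$ small and the circle appropriately. Rotating $\gamma_0$ about the $z$-axis gives item (1).

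\textbf{The leaves.} We set $\Sigma_s:=(1-s)\,\p\mathscr{V}_0$ for $s\in[0,1[$, so that $\Sigma_s$ is the dilation by $\lambda=1-s$ about $\zero$. Dilation preserves smoothness and convexity. The paraboloid part becomes $z=\frac{\kappa}{1-s}r^2$ on $0\le z\le z_0(s):=(1-s)z_0(0)$, and the cap part stays a spherical cap on $4z_0(s)\le z\le 1-s$ (we read the upper bound in item (3) as the top of the leaf). This gives item (3), with $z_0$ smooth and decreasing to $0$.

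For the disjointness in item (2), we note the following: since $\mathscr{V}_0$ is convex with $\zero\in\p\mathscr{V}_0$ and $\mathscr{V}_0\cap\p\R^3_+=\{\zero\}$, every ray from $\zero$ into $\R^3_+$ meeting $\overline{\mathscr{V}_0}$ meets $\p\mathscr{V}_0$ in exactly one point other than $\zero$. Here strict convexity, i.e.\ positive curvature of $\gamma_0$, is used to exclude segments on rays. Hence $\lambda\p\mathscr{V}_0\cap\lambda'\p\mathscr{V}_0=\{\zero\}$ for $\lambda\ne\lambda'$. By the same ray argument, $\bigcup_{s\in[0,1[}\Sigma_s=\overline{\mathscr{V}_0}$, because each point of $\overline{\mathscr{V}_0}\setminus\{\zero\}$ lies on a unique ray and so at a unique dilation parameter. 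The tangency $T_\zero\Sigma_s=\p\R^3_+$ holds because every leaf is locally the graph $z=\frac{\kappa}{1-s}r^2$.

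\textbf{Volume.} The set $\bigcup_{s\in]1/4,1/2[}\Sigma_s$ equals $\tfrac34\mathscr{V}_0\setminus\tfrac12\overline{\mathscr{V}_0}$, up to the point $\zero$, since the dilates are nested. Its volume is therefore $\big((3/4)^3-(1/2)^3\big)|\mathscr{V}_0|=\tfrac{19}{64}|\mathscr{V}_0|=:c_0>0$, which gives item (4).

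The main technical obstacle is the construction step. We need a single curve that is globally strictly convex and smooth, exactly parabolic near the bottom, exactly circular near the top, and fits in the box. Everything else follows formally from dilation invariance and the ray-uniqueness property of strictly convex bodies containing $\zero$ on their boundary. For the gluing, we would first parametrise by tangent angle $\theta$ and prescribe the radius of curvature $R(\theta)>0$ as a smooth partition-of-unity combination of the parabola's and the circle's. We would then adjust a small bump in $R$ to enforce that the curve closes up symmetrically.
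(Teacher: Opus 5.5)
Your proposal is correct and follows the same route as the paper: glue a paraboloid near $\zero$ to a spherical cap to get a strictly convex, rotationally symmetric $\mathscr{V}_0$, take $\Sigma_s=(1-s)\,\p\mathscr{V}_0$ with $z_0(s)=(1-s)z_0(0)$, and obtain item (4) by scaling. Your ray-uniqueness check of item (2) and the exact value $\tfrac{19}{64}|\mathscr{V}_0|$ are tidier than the paper's bare assertion and its John-ellipsoid detour, and taking the cap radius $\le\tfrac12$ actually respects the box constraint, which the paper's unit hemisphere does not; however, your opening claim that homotheties ``fail'' contradicts what you then correctly do and should be deleted.
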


In other words, we construct a smooth convex body $\mathscr{V}_0 \subset \R^3$ foliated by convex surfaces $\Sigma_s$ that intersect $C^1$-tangentially at the origin $\zero$. For each folium $\Sigma_s$, its upper part is a spherical cap and its lower part is a paraboloid. The shape of $\mathscr{V}_0$ is like a clam.

Several notations are used in this section. For $a, b \in \R^3$, we write $[a,b]$ for the straight line segment in $\R^3$ with endpoints $a$, $b$; similarly for $]a,b[$, $[a,b[$, and $]a,b]$. Also, for $S \subset \R^3$ and $\lambda \geq 0$ we write $\lambda S:=\{\lambda x:\, x \in S\} \subset \R^3$.

\begin{proof}[Proof of Proposition~\ref{prop, clam}]
We first construct $\Sigma_0 = \p\mathscr{V}_0$ as follows. Consider the paraboloid $$\mathscr{P}_0 = \left\{(y',y^3) \in \R^2\times \R:\, y^3 = \frac{|y'|^2}{4},\, |y'| \leq 1 \right\}.$$ Then at $|y'|=\frac{1}{\sqrt{2}}$ one has $y^3=\frac{1}{8}$ for $(y',y^3) \in \mathscr{P}_0$, while $y^3 = \frac{1}{2}$ for $(y',y^3) \in \p\B_1^+(\zero)$, the hemisphere of radius $1$. An easy gluing of $\mathscr{P}_0 \cap \{y^3 \leq \frac{1}{8}\}$ and $\p\B_1^+(\zero) \cap \{y^3 \geq \frac{1}{2}\}$ utilising the standard mollifier yields a smooth, (strictly) convex surface $\Sigma_0 \subset \left[-\frac{1}{2},\frac{1}{2}\right] \times \left[-\frac{1}{2}, \frac{1}{2}\right]  \times [0,1]$ that is rotationally symmetric with respect to the $z$-axis. Note that $\Sigma_0$ intersects $\p\R^3_+$ at the origin with $T_\zero\Sigma_0 = \p\R^3_+$.

Denote by $\mathscr{V}_0$ the convex body enclosed by $\Sigma_0$. It verifies Condition~(1) in the proposition.

Note that $\mathscr{V}_0$ is star-shaped with respect to $\zero$, and $$\overline{\mathscr{V}_0} = \bigcup_{x \in \Sigma_0} [0,x].$$ Then, observe that
\begin{align*}
    {\mathscr{V}_s} := \bigcup_{x \in \Sigma_0} ]0,(1-s)x[ \equiv (1-s)\mathscr{V}_0
\end{align*}
is a convex body for each $s \in ]0,1[$, while $\Sigma_s := \p\mathscr{V}_s$ satisfies $\overline{\mathscr{V}_0} = \bigcup_{s \in [0,1[} \Sigma_s$ and $\Sigma_s \cap \Sigma_{s'} = \{\zero\}$ for distinct $s$ and $s'$. We also observe that, for each $s \in [0,1[$, the part of $\Sigma_s$ in the vicinity of the origin $\zero$ is a paraboloid. Indeed, it takes the form $(1-s)\mathscr{P}_0$, and 
\begin{align*}
    x^3 = \frac{|x'|^2}{4(1-s)}\qquad \text{for each } x=(x',x^3) \in (1-s)\mathscr{P}_0.
\end{align*}
That is, $(1-s)\mathscr{P}_0$ is a paraboloid with a larger curvature than $\mathscr{P}_0$, which intersects the halfspace $\p\R^3_+$  in the $C^1$-tangential manner at $\zero$. This verifies Condition~(2) in the proposition. Earlier arguments in this paragraph and the choice $$z_0(s)=\frac{1-s}{8}$$ together verify Condition~(3).

Finally, denote by $J(K)$  the John ellipsoid of convex body $K$. Then
\begin{align*}
    {\rm Volume}\left(\bigcup_{s \in \left] \frac{1}{4}, \frac{1}{2} \right[} \Sigma_s \right) & =  {\rm Volume}\left(\mathscr{V}_{\frac{1}{4}} \setminus \mathscr{V}_{\frac{1}{2}} \right)\\
    &\geq  \frac{1}{4}{\rm Volume}\left(\mathscr{V}_{\frac{1}{4}} \right) \\
    &\geq \frac{1}{4}{\rm Volume}\left(J\left(\mathscr{V}_{\frac{1}{4}} \right)\right) =: c_0. 
\end{align*}
This verifies Condition~(4).  
\end{proof}

\begin{remark}
By adapting the proof above, one may construct a smooth convex body $\mathscr{V}_0$ such that the graphing function of each folium $\Sigma_s$ has arbitrary vanishing order at the origin for each $s \in [0,1[$; \emph{i.e.}, for each fixed $\ell \in \mathbb{N}$, $\Sigma_s$ intersects the halfspace $\p\R^3_+$ in the $C^\ell$-tangential manner at $\zero$. To this end, we simply replace $\mathscr{P}_0$ by $$\left\{(y',y^3) \in \R^2\times \R:\, y^3 = c |y'|^{\ell+1},\, |y'| \leq 1 \right\}$$ with some suitable $c=c(\ell)$. 

\end{remark}

\section{The proof}\label{sec: proof}

This section is devoted to the proof of the Main Theorem~\ref{thm: main}.

\begin{proof}
The interior epsilon regularity theorem has already been established in \cite{ABP}, so it suffices to establish boundary regularity only. All the constants $C_i$, $i \in \mathscr{N}$ in this proof are constants depending only on the $C^2$-geometry of the domain $\Omega$.

We divide the arguments into eight steps below. 

\smallskip
\noindent
{\bf Step~1: Boundary straightening.} Since $\Omega \Subset \R^3$ is a bounded smooth domain, there exists a positive constant $r_0$ depending only the $C^2$-geometry of $\Omega$ such that the following holds: for each $x_\star \in \po$, there exists a diffeomorphism $\Phi_{x_\star}$ defined on $\B_{32r_0}(x_\star)\cap\Omega$ which flattens the boundary near $x_\star$ and has bilipschitz constant no larger than $2$ (namely that $\Phi_{x_\star}$ is 2-bilipschitz). It is crucial that the choice of $r_0$ is uniform in $x_\star \in \po$.

Recall that $\B_{r}(x)$ denotes the Euclidean ball of centre $x$ and radius $r$ in $\R^3$ and $\B^+_{r}(x) := \B_{r}(x) \cap \R^3_+$ is the upper half ball; $\B_r \equiv \B_r(0)$ and $\B_r^+ \equiv \B_r^+(0)$. Now we fix, once and for all thoughout this proof, a point $x_\star \in \po$ and a number $$r_0 \in \left]0,\, \frac{{\rm inj\,rad}\,(\po)}{64}\right[.$$
Here and hereafter, ${\rm inj\,rad}\,(\po)$ is the injectivity radius of the boundary $\po$, which depends only on the $C^2$-geometry of $\Omega$. That is, the collar $\left\{x \in \Omega:\, {\rm dist}(x,\po)<{\rm inj\,rad}\,(\po)\right\}$ is globally diffeomorphic to the product $\po \times ]0,{\rm inj\,rad}\,(\po)[$ via the exponential map starting from the points on $\po$ along the inward normal vector field.

Let us designate
\begin{equation}\label{tilde-U, def}
    \tu (t,y) := U \left(t, \Phi^{-1}_{x_\star}(y)\right) \in L^4_t L^4_x \left(]-1,0[ \times\B^+_{64r_0} \right).
\end{equation}
Then 
\begin{align*}
    \left\| \tu\right\|_{L^4_t L^4_x \left(]-1,0[ \times\B^+_{32r_0} \right)} < 2\e_0.
\end{align*}

\smallskip
\noindent
{\bf Step~2: Spatial slicing.} Recall the clam-like smooth convex region $\mathscr{V}_0$ constructed in Proposition~\ref{prop, clam}. Consider $$\mathscr{V}' := 16r_0\mathscr{V}_0$$ together with its foliation
\begin{align*}
\overline{\mathscr{V}'} = \bigcup_{s \in [0,1[} \Sigma'_s
\end{align*}
with 
$$\Sigma_s' := 16r_0\Sigma_s.$$ The symbols $\mathscr{V}_0$, $\Sigma_s$ are as in Proposition~\ref{prop, clam}.

By Fubini's theorem, we have
\begin{align*}
\int_{-1}^0 \int_{4r_0}^{8r_0} \int_{\Sigma'_s} \left|\tu\right|^4\,\dd\haus\,\dd s \,\dd t \leq \left\|\tu \right\|_{L^4_tL^4_x\left(]-1,0[ \times \B^+_{32r_0}\right)}^4 < (2\e_0)^4.
\end{align*}
We then infer from the pigeon hole principle that
\begin{align}\label{spatial slice}
\int_{-1}^0 \int_{\Sigma'_s} \left|\tu\right|^4\,\dd\haus\,\dd t \leq 8\left\| \tu\right\|^4_{L^4_t L^4_x \left(]-1,0[ \times\B^+_{32r_0} \right)} < 128\,\e_0^4 
\end{align}
for some $s \in ]4r_0,8r_0[$.

For future references, from now on we designate
\begin{equation}\label{region V}
\mathscr{V} \equiv \mathscr{V}_{s,x_\star}  := \text{ the smooth, convex region whose boundary is $\Sigma'_s$}.
\end{equation}

\smallskip
\noindent
{\bf Step~3: Temporal slicing.} Again, by pigeon hole and Fubini, we have
\begin{equation}\label{temporal slice}
    \int_{\B^+_{32 r_0}} \left|\tu(t_0,x)\right|^4\,\dd x  \leq 8 \left\| \tu\right\|^4_{L^4_t L^4_x \left(]-1,0[ \times\B^+_{32r_0} \right)} <128 \e_0^4
\end{equation}
for some $t_0 \in \left]-1,-\frac{7}{8}\right[$.  

\smallskip
\noindent
{\bf Step~4: Boundary and initial data.}  Recall the nice region $\mathscr{V}$ from Equation~\eqref{region V}. Set 
\begin{equation*}
    \widetilde{a}:= \tu\Big|_{]-1,0[ \times \p\mathscr{V}},\qquad \widetilde{b}:= \tu\Big|_{\{t=t_0\}}.
\end{equation*}
In view of Equations~\eqref{spatial slice} and \eqref{temporal slice}, one has
\begin{align}\label{a, b-tilde}
\left\|\widetilde{a}\right\|_{L^4_tL^4_x\left({]-1,0[ \times \p\mathscr{V}}\right)} + \left\|\widetilde{b}\right\|_{L^4_x\left(\{t_0\} \times \B^+_{32r_0} \right)} &\leq 2^{\frac{7}{4} } \left\| \tu\right\|_{L^4_t L^4_x \left(]-1,0[ \times\B^+_{32 r_0} \right)} \nonumber\\
    &< 2^{\frac{11}{4}}\e_0.
\end{align}
Also recall the diffeomorphism $\Phi_{x_\star}$ defined in Step~1 of the proof. Set \begin{equation}\label{R, def}
    \mathscr{R} := \Phi_{x_\star}^{-1} \left(\mathscr{V}\right) \subset \Omega.
\end{equation}
Note that $\overline{\mathscr{R}}$ intersects $\po$ in the singleton $x_\star$, with $$T_{x_\star}\left(\p{\mathscr{R}}\right) = T_{x_\star}\left(\po\right).$$

We have the following:
\begin{align}\label{a,b-def}
    a := U\Big|_{]-1,0[ \times \p \mathscr{R}},\qquad b:= U\Big|_{\{t=t_0\}}
\end{align}
satisfy that 
\begin{align}\label{a, b estimate}
\left\|{a}\right\|_{L^4_tL^4_x\left({]-1,0[ \times \p\mathscr{R}}\right)} + \left\|{b}\right\|_{L^4_x\left(\{t_0\} \times \Phi_{\star}^{-1}\left(\B^+_{16r_0}\right) \right)}  &\leq 8\|U\|_{L^4_tL^4_x\left(]-1,0[ \times \left[\B_{32r_0}(x_\star)\cap\Omega\right]\right)}\nonumber\\
&< 8\e_0,
\end{align}
thanks to Equation~\eqref{a, b-tilde} and the fact that $\Phi_{x_\star}$ is $2$-bilipschitz.

\smallskip
\noindent
{\bf Step~5: Rectifying the boundary data.} We argue as in \cite[\S 3.2, Proof of Theorem~A']{ABP}. The treatment for nonzero initial data $b$ relies on the Stokes semigroup estimates in the $L^p$-framework, which is a classical topic (see, \emph{e.g.}, Giga~\cite{giga}).

Indeed, the quantity $b$ defined in Equation~\eqref{a,b-def} is divergence-free in the sense of distributions on $\{t_0\} \times \Phi_{\star}^{-1}\left(\B^+_{16 r_0}\right)\subset \{t_0\} \times \left(\B_{32 r_0} \cap \Omega\right)$. Also, in view of the definition of $\mathscr{R}$ in Equation~\eqref{R, def} and that $\Phi_{x_\star}$ is 2-bilipschitz, we have $\mathscr{R} \subset \B_{16 r_0}(x_\star) \cap \Omega$.

Let us choose a spatial cutoff function 
\begin{align}\label{varphi, spatial cutoff}
&\varphi \in C^\infty_c\left(\co\right),\quad 0 \leq \varphi \leq 1,\quad \varphi \equiv 1 \text{ on $\B_{10 r_0}(x_\star)\cap \co$},\nonumber\\
&\qquad \varphi \equiv 0\text{ on $\co\setminus \B_{14 r_0}(x_\star)$}, \quad \text{and } |\na \varphi| \leq \frac{1}{r_0}.
\end{align}
Then $\varphi b$ (identified with its one extension-by-zero) defines an extension of $b\big|_{\mathscr{R}}$ on $\{t_0\} \times \Omega$, which is not divergence-free in general.

By the Bogovski\u{i} estimates~\cite{Bog1, Bog2}, one may find a correction term $v_{\rm Bog} \in L^4_x\left(\{t_0\} \times \Omega\right)$ and a constant $C_0$ depending only on $r_0$ (hence only on the $C^2$-geometry of $\Omega$) such that 
\begin{equation*}
    {\rm div}\left(v_{\rm Bog}\right) = -{\rm div}(\varphi b),\qquad \left\|v_{\rm Bog}\right\|_{L^4_x\left(\{t_0\} \times \Omega\right)} \leq C_0 \|b\|_{L^4\left(\{t_0\} \times \left[\B_{16 r_0}(x_\star)\cap\Omega\right]\right)}.
\end{equation*} 

To proceed, define
\begin{equation}\label{Eb}
    E[b] := \varphi b + v_{\rm Bog} \in L^4_x\left(\{t_0\} \times \Omega\right).
\end{equation}
This is a divergence-free extension of $\varphi b$ (hence of $b\big|_{\mathscr{R}}$) with 
\begin{equation}\label{bound for Eb}
    \big\|E[b]\big\|_{L^4_x\left(\{t_0\} \times \Omega\right)} \leq C_1 \|b\|_{L^4_x\left(\{t_0\} \times \left[\B_{16 r_0}(x_\star)\cap\Omega\right]\right)}
\end{equation}
for some $C_1$ depending only on $r_0$. Then we set 
\begin{equation}\label{heat kernel correction}
    v_{\rm bdry} := \mathcal{H}_\Omega(\bullet-t_0) \star E[b],
\end{equation}
where $\mathcal{H}_\Omega$ is the Dirichlet heat kernel on $\Omega$. Recall that $t_0 \in \left]-1,-\frac{7}{8}\right[$. We then have the estimates (\emph{cf}. Fabes--Lewis--Rivi\`{e}re \cite[Lemma~IV.3.2]{FLR2} and the analogues for $\mathcal{H}_\Omega$ \cite{vdB, GSC}):
\begin{equation}\label{v-bdry estimates}
    \begin{cases}
\|v_{\rm bdry}\|_{L^4\left(]t_0,0[ \times \p\mathscr{R}\right)} \leq C_2 \left\{(-t_0)^{\frac{1}{8}} + (-t_0)^{\frac{1}{4}}\right\}\big\|E[b]\big\|_{L^4_x\left(\{t_0\} \times \Omega\right)} \leq C_3 \|b\|_{L^4_x\left(\{t_0\} \times \Omega\right)},\\
\|v_{\rm bdry}\|_{L^4_t L^6_x\left(]t_0,0[ \times \mathscr{R}\right)} \leq C_4(-t_0)^{\frac{1}{8}} \big\|E[b]\big\|_{L^4_x\left(\{t_0\} \times \Omega\right)} \leq C_5 \|b\|_{L^4_x\left(\{t_0\} \times \Omega\right)}.
    \end{cases}
\end{equation}
Here $C_2,\cdots,C_5$ depend only on the $C^2$-geometry of $\Omega$.

Observe that ${\rm div}\,v_{\rm bdry}=0$ in the distributional sense. Hence, 
\begin{align*}
    \int_{\p\mathscr{R}} v_{\rm bdry}\cdot{\bf n}\,\dd\haus =0.
\end{align*}

\smallskip
\noindent
{\bf Step~6: Linear estimates.} Put
\begin{equation}\label{a'}
    a' := a - v_{\rm bdry}\big|_{\p\mathscr{R}}.
\end{equation} 
By Farwig--Kozono--Sohr \cite[Theorem~2.2]{FKS}, there exists a unique very weak solution 
\begin{equation*}
    U_{a'} \in L^4_t L^6_x \left(]t_0,0[ \times \mathscr{R} \right)
\end{equation*}
to the Stokes problem $\p_t U_{a'} - \Delta U_{a'}+\na P=0$ with zero external forcing, zero initial data at $t=t_0$, and boundary data $a'$ on $\p\mathscr{R}$.

Recall the boundary correction term $ v_{\rm bdry}$ from Equation~\eqref{heat kernel correction}. By Farwig--Galdi--Sohr \cite{FGS} and  
Farwig--Kozono--Sohr \cite[Lemma~1.2]{FKS}, 
\begin{equation}\label{bar-U def}
    \overline{U} := U_{a'} +  v_{\rm bdry}  \in L^4_t L^6_x \left(]t_0,0[ \times \mathscr{R} \right)
\end{equation} 
is the unique very weak solution to the Stokes problem $\p_t  \overline{U} - \Delta \overline{U}+\na P=0$ with zero external forcing, initial data $b$ at $t=t_0$, and boundary data $a$ on $\p\mathscr{R}$.

Moreover, $\overline{U}$ satisfies the estimate:
\begin{equation}\label{bar-U estimate}
    \left\|\overline{U}\right\|_{L^4_t L^6_x \left(]t_0,0[ \times \mathscr{R} \right)} \leq C_6\left\{ \|a\|_{L^4\left(]t_0,0[ \times \p\mathscr{R}\right)} + \|b\|_{L^4_x\left(\{t_0\} \times \left[\B_{32r_0}(x_\star)\cap\Omega\right]\right)} \right\},
\end{equation}
where $C_6$ depends only on $r_0$. It then holds by Equation~\eqref{a, b estimate}, it holds that
\begin{equation}\label{bar-U estimate, epsilon}
    \left\|\overline{U}\right\|_{L^4_t L^6_x \left(]t_0,0[ \times \mathscr{R} \right)} < 8C_6\e_0.
\end{equation}

The remaining parts of the proof are essentially the same as in \cite[Proof of Theorem~A]{ABP}. We present the details for the sake of completeness.

\smallskip
\noindent
{\bf Step~7: Fixed point arguments.}  Having obtained the linear Stokes estimate, namely Equation~\eqref{bar-U estimate}, we are ready to deduce the existence and uniqueness of the very weak solution to the Navier--Stokes equation. 

Consider the perturbed Stokes system:
\begin{equation}\label{perturbed stokes}
    \begin{cases}
    \p_t V - \Delta V + \na P = -{\rm div}\left[\left(\baru + V\right)\otimes \left(\baru + V\right)\right]\qquad \text{in } ]t_0,0[ \times \mathscr{R},\\
{\rm div}\, V = 0 \qquad \text{in } ]t_0,0[ \times \mathscr{R},\\
V=0 \qquad \text{on } ]t_0,0[ \times \p\mathscr{R},\\
V\big|_{t=t_0} = 0 \qquad \text{on } \{t_0\} \times \mathscr{R}.
    \end{cases}
\end{equation}
We construct a fixed point for the mild solution
\begin{equation}\label{mild sol}
    V(t,x) = - \int_{t_0}^0 e^{s\stokes}{\rm div}\,\left[\left(\baru + V\right)\otimes \left(\baru + V\right)\right]\,\dd s \equiv \bilin\left[\baru + V; \baru + V\right],
\end{equation}
where $\{e^{-t\stokes}\}_{t>0}$ is the Stokes semigroup and $\stokes$ the Stokes operator. Also, $\bilin$ is a bilinear operator given by
\begin{align*}
    \bilin[D;E] := -\int_{t_0}^0
e^{s\stokes}{\rm div}\,(D \otimes E)\,\dd s.
\end{align*}
We shall also use the linear operator:
\begin{align*}
    L[D](t,\bullet) := -\int_{t_0}^t e^{s\stokes} {\rm div}\left[ D \otimes \baru + \baru \otimes D \right]\,\dd s. 
\end{align*}

From the well-known estimate:
\begin{align*}
    \left\| e^{s\stokes} {\rm div}(D \otimes E) \right\|_{L^6_x} \leq \frac{C_7}{(-s)^{3/4}}\|D(t,\bullet)\|_{L^6_x}\|E(t,\bullet)\|_{L^6_x},
\end{align*}
together with the Hardy--Littlewood--Sobolev inequality, we deduce that
\begin{align}\label{C8 bound}
\big\|B[D;E]\big\|_{L^4_tL^6_x\left(]t_0,0[ \times \mathscr{R}\right)} \leq C_8 \|D\|_{L^4_tL^6_x\left(]t_0,0[ \times \mathscr{R}\right)} \|E\|_{L^4_tL^6_x\left(]t_0,0[ \times \mathscr{R}\right)}
\end{align}
as well as
\begin{align}\label{C9 bound}
\big\|L[D]\big\|_{L^4_tL^6_x\left(]t_0,0[ \times \mathscr{R}\right)} \leq C_9 \|D\|_{L^4_tL^6_x\left(]t_0,0[ \times \mathscr{R}\right)} \big\|\baru\big\|_{L^4_tL^6_x\left(]t_0,0[ \times \mathscr{R}\right)}.
\end{align}
The constant $C_7 = C_7(\Omega)$. It then holds by Gallagher--Iftimie--Planchon \cite[Lemma~4.1]{GIP} that Equation~\eqref{mild sol} has a unique fixed point $V \in L^4_tL^6_x$, provided that
\begin{equation}\label{smallness condition on bar-U}
\big\|\baru\big\|_{L^4_tL^6_x\left(]t_0,0[ \times \mathscr{R}\right)} < \frac{\min\left\{C_{8}^{-1},C_{9}^{-1}\right\}}{4}.
\end{equation}
The parameters $C_8$, $C_9$ are given by  Equations~\eqref{C8 bound} and \eqref{C9 bound}. In addition, the solution $V$ satisfies
\begin{align*}
    \|V\|_{L^4_tL^6_x\left(]t_0,0[ \times \mathscr{R}\right)} \leq 4C_8\big\|\baru\big\|_{L^4_tL^6_x\left(]t_0,0[ \times \mathscr{R}\right)}^2. 
\end{align*}

In view of Equation~\eqref{bar-U estimate, epsilon} obtained in the previous step, the smallness condition in Equation~\eqref{smallness condition on bar-U} is satisfied whenever
\begin{equation}\label{epsilon-0}
    \epsilon_0 \leq \frac{\min\left\{C_{8}^{-1},C_{9}^{-1}\right\}}{32 C_6}. 
\end{equation}
In this case, the unique fixed point $V$ satisfies
\begin{equation}\label{V, final}
     \|V\|_{L^4_tL^6_x\left(]t_0,0[ \times \mathscr{R}\right)} \leq 4C_8 (8C_6\e_0)^2 = 256 \,C_6^2C_8\e_0^2 \leq 64C_6\e_0.
\end{equation}

By \cite[Theorem~3.3]{FLR2}, $V$ is the unique strong solution to Equation~\eqref{perturbed stokes} in $L^4_tL^6_x\left(]t_0,0[\times\mathscr{R}\right)$.

\smallskip
\noindent
{\bf Step~7: Weak-strong uniqueness.}  We show that $$U \equiv \baru +V,$$ where $U$ is the given finite energy weak solution to the Navier--Stokes equations, and $V$ is the unique strong solution to Equation~\eqref{perturbed stokes} in  $L^4_tL^6_x\left(]t_0,0[\times\mathscr{R}\right)$ constructed in the previous step. 

Indeed, we set $$Z := V+\baru-U$$ and recall that $U-\baru \in L^4_tL^4_x\left(]t_0,0[\times\mathscr{R}\right)$. Then $Z$ is a very weak solution to the Stokes system
\begin{equation*}
\begin{cases}
\p_t Z - \Delta Z + \na P = 0\quad\text{and}\quad {\rm div}\,Z = F\qquad \text{in } ]t_0,0[\times \mathscr{R},\\
Z = 0\qquad \text{ on } ]t_0,0[\times \p\mathscr{R},\\
Z\big|_{t=t_0} = 0 \qquad \text{ on } \{t_0\} \times \mathscr{R},
\end{cases}
\end{equation*}
with the forcing term
\begin{align*}
    F = U \otimes U - \left(\baru + V\right)\otimes \left(\baru + V\right).
\end{align*}

Observe by H\"{o}lder's inequality that $F \in L^2_tL^2_x\left(]t_0,0[\times \mathscr{R}\right)$. Thanks to Sohr \cite[IV, Theorems~2.3.1 and 2.4.1]{sohr} (see also Albritton--Barker--Prange \cite[Lemma~2.5 and Remark~2.6]{ABP}), $Z$ is in the regularity class $\left(C^0_t L^2_x \cap L^2_tW^{1,2}_{0,x} \right)\left([t_0,0]\times\mathscr{R}\right)$ and satisfies the energy identity:
\begin{align}\label{energy id}
    \frac{1}{2}\int_{\mathscr{R}}|Z(t,x)|^2\,\dd x + \int_{t_0}^t \int_\mathscr{R}|\na Z(t',x)|^2\,\dd x\,\dd t' = \int_{t_0}^t\int_{\mathscr{R}} \left[Z\otimes \left(V+\baru\right)\right] : \na Z\,\dd s\,\dd t'  
\end{align}
for any $t \in [t_0,0[$. See also Proposition~\ref{prop: uniqueness and energy}.

Consider 
\begin{equation*}
    \mathscr{E}(t):= \sup_{t' \in [t_0,t]} \frac{1}{2}\int_{\mathscr{R}}|Z(t',x)|^2\,\dd x + \int_{t_0}^t \int_\mathscr{R}|\na Z(t',x)|^2\,\dd x\,\dd t'.
\end{equation*}
By  Equation~\eqref{energy id} and H\"{o}lder's, Young's and the Sobolev inequalities, one concludes that
\begin{align*}
    \mathscr{E}(t) \leq C_{10}\mathscr{E}(t) \int_{t_0}^t \left\|\left(V+\baru\right)(s,\bullet)\right\|_{L^6_x(\mathscr{R})}^4\,\dd s
\end{align*}
for any $t \in [t_0,0[$. It is easy to see that $\mathscr{E}(t)=0$ for any $t>t_0$ with $t-t_0$ sufficiently small. Thus, a continuity argument yields that $\mathscr{E}(t)\equiv 0$ for all $t \in [t_0,0[$.

\smallskip
\noindent
{\bf Step~8: Iteration for higher regularity via Lady\v{z}enskaja--Prodi--Serrin.} 
Once we obtain $U \in L^4_tL^6_x\left(]t_0,0[\times\mathscr{R}\right)$, we may conclude via a bootstrap argument via the Lady\v{z}enskaja--Prodi--Serrin regularity criterion \cite{Lady, prodi, Ser}, \emph{cf. e.g.}, Barker--Prange \cite[Lemma~12]{BP}, to promote the regularity of $U \in L^4_tL^6_x$ (which is the endpoint of the range of indices entailed by Lady\v{z}enskaja--Prodi--Serrin) to $L^6_tL^6_x$ (which is subcritical), and hence successively to $L^\infty_t L^\infty_x$. See also Seregin--Shilkin--Solonnikov \cite{sss} for a similar bootstrap process for the boundary regularity of suitable weak solutions involving a smallness condition on the pressure.

From the previous steps in this proof, $ U \in L^4_tL^6_x\left(]t_0,0[\times\mathscr{R}\right)$ is the unique very weak solution to the Navier--Stokes equations in $]t_0,0[\times\mathscr{R}$, where $t_0 \in \left]-1,-\frac{7}{8}\right[$ and $\mathscr{R} \subset\Omega$ is a smooth convex subdomain. Moreover, in view of the bounds in Equations~\eqref{bar-U estimate, epsilon}, \eqref{bar-U estimate}, and \eqref{V, final}, as well as the construction of the initial and boundary data $a$, $b$ (see Equations~\eqref{a,b-def} and \eqref{a, b estimate}), we obtain that
\begin{align}
\|U\|_{L^4_tL^6_x\left(]t_0,0[\times\mathscr{R}\right)} \leq C_{11} \left\{ \|a\|_{L^4\left(]t_0,0[ \times \p\mathscr{R}\right)} + \|b\|_{L^4_x\left(\{t_0\} \times \left[\B_{32r_0}(x_\star)\cap\Omega\right]\right)} \right\} \leq C_{12}\e_0,
\end{align}
where $C_{11}$, $C_{12}$ depend only on the $C^2$-geometry of $\Omega$.

It is crucial here that the boundary data $a = U\big|_{]t_0,0[\times\mathscr{R}}$ is  small in the $L^4_tL^4_x$-norm. Fix any $x_\star\in\po$ and let $\mathscr{R}$ be the smooth convex region in Equation~\eqref{R, def} taken with respect to the base point $x_\star$. By  bootstrap arguments in the previous paragraph, we deduce that 
\begin{align*}
\|U\|_{L^6_tL^6_x\left(]t_0,0[\times\mathscr{R}'\right)} \leq M(1+\e) \qquad\text{whenever }0<\e<\e_0.
\end{align*}
Here we denote
\begin{equation*}
    \begin{cases}
    \e:=\|a\|_{L^4\left(]t_0,0[ \times \p\mathscr{R}\right)} + \|b\|_{L^4_x\left(\{t_0\} \times \left[\B_{32r_0}(x_\star)\cap\Omega\right]\right)},\\
    \|U\|_{\left(L^\infty_t L^2_x \cap L^2_t \dot{W}^{1,2}_x\right)\left(]-1,0[\times\Omega\right)} \leq M,\\
    \mathscr{R}' = \mathscr{R} \cap {\bf B}_{\kappa r_0}^+(x_\star)\qquad \text{with a constant $\kappa \in \left]0, 8\right]$  depending only on the geometry of $\mathscr{R}$.}
    \end{cases}
\end{equation*}

Therefore, we may bootstrap again as in \cite{Ser, prodi, Lady, BP, sss} (see Theorem~\ref{thm: LPS}) to deduce that
\begin{align}\label{wp}
\|U\|_{L^\infty_tL^\infty_x(]t_0,0[ \times \mathscr{R}'')} \leq \wp(M,\e)\qquad\text{whenever }0<\e<\e_0,
\end{align}
where $\wp(M,\e)$ is a polynomial in $M$ and $\e$, and 
\begin{align*}
    \mathscr{R}'' = \mathscr{R} \cap {\bf B}_{\frac{\kappa r_0}{2}}^+(x_\star).
\end{align*}
In particular, the region $\overline{\mathscr{R}''}$ intersects the boundary $\po$ tangentially at $x_\star$, and the bound in  Equation~\eqref{wp} is independent of the choice of $x_\star$. By varying $x_\star$ through $\po$ and invoking the interior regularity theorem (see \cite[Theorem~B]{ABP}), we deduce that 
\begin{align*}
\|U\|_{L^\infty_tL^\infty_x(]t_0,0[ \times \Omega)} \leq C_{13}(M,\e_0)\qquad\text{whenever }0<\e<\e_0,
\end{align*}
where $\e_0$ depends only on the $C^2$-geometry of $\Omega$.

It is classically known that essentially bounded weak solutions to the Navier--Stokes equations are $C^\infty$ \cite{Ser}. Hence, the proof of Theorem~\ref{thm: main} is now complete.  \end{proof}

\section{Discussions}\label{sec: concluding remarks}

Several concluding remarks are in order below.

\begin{enumerate}
    \item 
Theorem~\ref{thm: main} remains valid for an unbounded domain $\Omega \subset \R^3$ with compact smooth boundary $\po$.

\item 
The existence and uniqueness for very weak solutions in $L^4_tL^6_x$ with boundary data in $L^4_tL^4_x$ (\emph{i.e.}, Proposition~\ref{prop: very weak} taken from \cite{FKS, FGS}) require the domain $\Omega \subset \R^3$ to be of regularity $C^{2,1}$. For future investigations, we shall explore how to lower the regularity assumptions for $\Omega$. The recent paper \cite{breit} by Breit has proved an epsilon boundary regularity theorem for local suitable weak solutions (see \cite{CKN, Lin}) over low-regularity domains. 

\item 
Our boundary epsilon regularity Theorem~\ref{thm: main} imposes no condition on pressure. This is in stark contrast to the epsilon regularity theorems for the ``local suitable weak solutions'' \emph{\`{a} la} Caffarelli--Kohn--Nirenberg \cite{CKN}. In our case, the control for pressure is implicitly contained in the estimates for the Stokes semigroup in the very weak solutions.

\item
The $L^\infty_tL^\infty_x$-bound for $U$ in Theorem~\ref{thm: main} depends only on $M$ and the $C^2$-geometry of $\Omega$. The dependence on $C^2$-geometry enters the proof via the injectivity radius of the boundary $\po$ in $\overline{\Omega}$ (\emph{i.e.}, the parameter $r_0$ in the proof of Theorem~\ref{thm: main}). Only the higher-regularity bounds for $U$ may depend on the $C^{2+}$-geometry of $\Omega$.

\end{enumerate}



\medskip

\noindent
{\bf Acknowledgement}. The research of Siran Li is supported by NSFC Projects 12331008 and 12411530065, Young Elite Scientists Sponsorship Program by CAST 2023QNRC001, National Key Research $\&$ Development Programs 2023YFA1010900 and 2024YFA1014900, Shanghai Rising-Star
Program 24QA2703600, Qi-Guang Scholarship, and Shanghai Frontiers Science Center of Modern Analysis.

\medskip
\noindent
{\bf Competing Interest Statement}. We declare that there are no conflicts of interest involved.

\medskip
\noindent
{\bf Data Availability Statement}. We declare that no data are associated with this work.


\begin{thebibliography}{99}

\bibitem{ab}
D. Albritton and T. Barker, Localised necessary conditions for singularity formation in the Navier-Stokes equations with curved boundary, \textit{J. Differ. Equ.} \textbf{269} (2020), 7529--7573.

\bibitem{ABP}
D. Albritton, T. Barker, and C. Prange, Epsilon regularity for the Navier--Stokes equations via weak-strong uniqueness, \textit{J. Math. Fluid Mech.} \textbf{25} (2023),  Paper No. 49.

\bibitem{ama1}
H. Amann, Navier–Stokes equations with nonhomogeneous Dirichlet data, \textit{Journal Nonlinear Math. Phys.} \textbf{10} Suppl. 1 (2003), 1--11.


\bibitem{ama2}
H. Amann, \textit{Nonhomogeneous Navier–Stokes equations with integrable low-regularity data}, Int. Math. Ser., Kluwer Academic/Plenum Publishing, New York, 2002, 1--26.

\bibitem{BP}
T. Barker and C. Prange, Localized smoothing for the Navier–Stokes
equations and concentration of critical
norms Near singularities, \textit{Arch. Ration. Mech. Anal.} \textbf{236}, (2020) 1487--1541.


\bibitem{Bog1}
M.~E. Bogovskiĭ, Solution of the first boundary value problem for an equation of continuity of an incompressible medium, {\it Dokl. Akad. Nauk SSSR,} {\bf 248} (1979), no.~5, 1037--1040.

\bibitem{Bog2}
M.~E. Bogovskiĭ, Solutions of some problems of vector analysis, associated with the operators ${\rm div}$\ and ${\rm grad}$, in {\it Theory of cubature formulas and the application of functional analysis to problems of mathematical physics}, pp. 5--40, 149, Proc. Sobolev Sem., {\bf No. 1}, 1980, Akad. Nauk SSSR Sibirsk. Otdel., Inst. Mat., Novosibirsk.

\bibitem{breit}
D. Breit, Partial boundary regularity for the Navier--Stokes equations in irregular domains, \textit{J. Funct. Anal.} \textbf{289} (2025), Article No. 111188.

\bibitem{CKN}
L. Caffarelli, R. Kohn, and L. Nirenberg, Partial regularity of suitable weak solutions of the Navier--Stokes equations, \textit{Comm. Pure Appl. Math.} \textbf{35} (1982), 771--831.

\bibitem{ESS}
L. Escauriaza, G.~A. Seregin, and V. \u{S}ver\'{a}k, $L_{3,\infty}$-solutions of Navier--Stokes equations and backward uniqueness, \textit{Uspekhi Mat. Nauk.} \textbf{58} (2003), 3--44.



\bibitem{FLR1}
E.~B. Fabes, J.~E. Lewis, and N.~M. Rivi\`{e}re, Singular integrals and hydrodynamic potentials, \textit{Amer. J. Math.} \textbf{99} (1977), 601--625.


\bibitem{FLR2}
E.~B. Fabes, J.~E. Lewis, and N.~M. Rivi\`{e}re, Boundary value problems for the Navier--Stokes equations, \textit{Amer. J. Math.} \textbf{99} (1977), 626--668.


\bibitem{FGS}
R. Farwig, G.~P. Galdi, and H. Sohr, A new class of weak solutions of the Navier--Stokes equations with nonhomogeneous data, \textit{J. Math. Fluid Mech.} \textbf{8} (2006), 423--444.

\bibitem{FKS}
R. Farwig, H. Kozono, and H. Sohr, Global weak solutions of the Naveir--Stokes equations with nonhomogeneous boundary data and divergence, \textit{Rend. Sem. Math. Univ. Padova} \textbf{125} (2011), 51--70.

\bibitem{fef}
C.~L. Fefferman, Existence and smoothness of the Navier--Stokes equation, pp. 57–67 in \textit{The millennium prize problems}, edited by J. Carlson \textit{et al.}, Clay Math. Inst., 2006. 

\bibitem{foias}
C. Foias, Une remarque sur l'unicit\'{e} des solutions des \'{e}quations de Navier-Stokes en dimension $n$, \textit{Bull. Soc. Math. France} \textbf{89} (1961), 1--8.

\bibitem{GIP}
I. Gallagher, D. Iftimie, and F. Planchon, Asymptotics and stability for global solutions to the Navier--Stokes equations, \textit{Ann. Inst. Fourier (Grenoble)} \textbf{53} (2003), 1387--1424.


\bibitem{gal1}
G.~P. Galdi, On the energy equality for distributional solutions to Naver--Stokes equations,  \textit{Proc. Amer. Math. Soc.} \textbf{147} (2019), 785--792.

\bibitem{gal2}
G.~P. Galdi, On the relation between very weak and Leray--Hopf solutions to Navier--Stokes equations, \textit{Proc. Amer. Math. Soc.} \textbf{147} (2019), 5349--5359.


\bibitem{giga}
Y. Giga, Solutions for semilinear parabolic equations in $L^p$ and regularity of weak solutions of the Navier--Stokes system, \textit{J. Differ. Equ.} \textbf{62} (1986), 186--212.

\bibitem{GSC}
P. Gyrya and L. Saloff-Coste, Neumann and Dirichlet heat kernels in inner uniform domains, \textit{Astérisque} \textbf{336} (2011), viii+144pp. 

\bibitem{hopf}
E. Hopf, \:{U}ber die Anfangswertaufgabe f\"{u}r die hydrodynamischen Grundgleichungen, \textit{Math. Nachr.} \textbf{4} (1951), 213--231.

\bibitem{Lady}
O.A. Ladyzhenskaya, Uniqueness and smoothness of generalized solutions of Navier-Stokes equations, \textit{Zap. Nau\u{c}. Semin. Leningrad. Otdel. Mat. Inst. Steklov. (LOMI)} \textbf{5} (1967) 169--185.

\bibitem{LS}
O.~A. Ladyzhenskaya and G.~A. Seregin, On partial regularity of suitable weak solutions to the three-dimensional Navier--Stokes equations, \textit{J. Math. Fluid Mech.} \textbf{1} (1999), 356--387.

\bibitem{LR}
Z. Lei and X. Ren, Quantitative partial regularity of the Navier-Stokes equations and applications, \textit{Adv. Math.} \textbf{445} (2024), Paper No. 109654, 40 pp.

\bibitem{leray}
J. Leray, Sur le mouvement d'un liquide visqueux emplissant l'espace, \textit{Acta. Math.} \textbf{63} (1934), 183--248.


\bibitem{Lin}
F. Lin, A new proof of the Caffarelli--Kohn--Nirenberg theorem, \textit{Comm. Pure Appl. Math.} \textbf{51} (1998), 241--257.

\bibitem{lions}
P.-L. Lions, \textit{Mathematical topics in fluid mechanics, I: Incompressible models}, Oxford Lecture Series in Mathematics and its Applications 3, Oxford University Press, New York, 1996. 

\bibitem{NRS}
J. Ne\u{c}as, M. R${\rm \mathring{u}}$\v{z}i\v{c}ka, and V. \u{S}ver\'{a}k, On Leray's self-similar solutions of the Navier--Stokes equations, \textit{Acta Math.} \textbf{176} (1996), 283--294.

\bibitem{prodi}
G. Prodi, Un teorema di unicit\`{a} per le equazioni di Navier--Stokes, \textit{Ann. Mat. Pura Appl.} \textbf{48} (1959), 173--182.

\bibitem{S03}
G.~A. Seregin, Remarks on regularity of weak solutions to the Navier-Stokes equations near the boundary, \textit{Zap. Nauchn. Sem. S.-Peterburg. Otdel. Mat. Inst. Steklov. (POMI)} \textbf{295} (2003), Kraev. Zadachi Mat. Fiz. i Smezh. Vopr. Teor. Funkts. 33, 168–179, 246; translation in
\textit{J. Math. Sci. (N.Y.)} \textbf{127} (2005), 1915--1922.

\bibitem{sss}
G.~A. Seregin, T.~N. Shilkin, and V.~A. Solonnikov, Boundary partial regularity for the Navier-Stokes equations, \textit{Zap. Nauchn. Sem. S.-Peterburg. Otdel. Mat. Inst. Steklov. (POMI)} \textbf{310} (2004), Kraev. Zadachi Mat. Fiz. i Smezh. Vopr. Teor. Funkts. 35 [34], 158–190, 228; translation in
\textit{J. Math. Sci. (N.Y.)} \textbf{132} (2006), 339--358.

\bibitem{Ser}
J. Serrin, On the interior regularity of weak solutions of the Navier--Stokes equations, \textit{Arch. Ration. Mech. Anal.} \textbf{9} (1962), 187--195.

\bibitem{sohr}
H. Sohr, \textit{The Navier--Stokes equations. An elementary functional analytic approach}, Modern Birkh\"{a}user Classics, Birkh\"{a}user/Springer Basel AG, Basel (2001) [2013 reprint of the 2001 original].

\bibitem{Str}
M. Struwe, On partial regularity results for the Navier--Stokes equations, \textit{Comm. Pure Appl. Math.} \textbf{41} (1988), 437--458.

\bibitem{tao}
T. Tao, Localisation and compactness properties of the Navier--Stokes global regularity problem, \textit{Anal. PDE} \textbf{6} (2013), 25--107.

\bibitem{Tsai}
T.~P. Tsai, On Leray's self-similar solutions of the Navier--Stokes equations satisfying local energy estimates, \textit{Arch. Ration. Mech. Anal.} \textbf{143} (1998), 29--51.

\bibitem{Vas}
A.~F. Vasseur, A new proof of partial regularity of solutions to Navier--Stokes equations, \textit{NoDEA Nonlinear Differ. Equ. Appl.} \textbf{14} (2007), 753--785.

\bibitem{vdB}
M. van den Berg, Gaussian bounds for the Dirichlet heat kernel, \textit{J. Funct. Anal.} \textbf{88} (1990), 267--278.

\bibitem{Wang}
L. Wang, Partial regularity for Navier-Stokes equations, \textit{
J. Math. Fluid Mech.} \textbf{27} (2025), Paper No. 26.

\end{thebibliography}
\end{document}